\DeclareSymbolFont{cyrletters}{OT2}{wncyr}{m}{n}
\DeclareMathSymbol{\Sha}{\mathalpha}{cyrletters}{"58}
\newcommand\cyr
\renewcommand\rmdefault{wncyr}
\renewcommand\sfdefault{wncyss}
\renewcommand\encodingdefault{OT2}
\DeclareTextFontCommand{\textcyr}{\cyr}
\newtheorem{theorem}{Theorem}[section]
\newtheorem{lemma}[theorem]{Lemma}
\newtheorem{corollary}[theorem]{Corollary}
\theoremstyle{definition}
\newtheorem{definition}[theorem]{Definition}
\newtheorem{example}[theorem]{Example}
\theoremstyle{remark}
\newtheorem{remark}[theorem]{Remark}
\numberwithin{equation}{section}
\begin{document}
\setcounter{page}{1}

\title[Remark on height functions]{Remark on height functions}

\author[Nikolaev]
{Igor V. Nikolaev$^1$}

\address{$^{1}$ Department of Mathematics and Computer Science, St.~John's University, 8000 Utopia Parkway,  
New York,  NY 11439, United States.}
\email{\textcolor[rgb]{0.00,0.00,0.84}{igor.v.nikolaev@gmail.com}}

%%%%%%\dedicatory{All data are available as part of the manuscript}
%\dedicatory{In memory of Ola Bratteli}

\subjclass[2010]{Primary 11G50; Secondary 46L85.}

\keywords{height function, Serre $C^*$-algebra.}

%\date{Received:  August 14, 2015; Revised: yyyyyy; Accepted: zzzzzz.}

\begin{abstract}
Let $k$ be a number field and $V(k)$ an $n$-dimensional projective variety over $k$.
We use  the $K$-theory of a $C^*$-algebra $\mathcal{A}_V$ associated to $V(k)$
to define  a height of points of  $V(k)$.  The corresponding counting function  is calculated  
 and we show that it coincides with the known formulas for $n=1$.  
As an application, it is proved that  the set $V(k)$ is finite, whenever the sum of 
the odd Betti numbers of $V(k)$ exceeds $n+1$.   Our construction depends  on the 
$n$-dimensional  Minkowski question-mark function studied by Panti and others. 
\end{abstract}

\maketitle

%**************************************************************************
\section{Introduction}
%***************************************************************************
The height function is a measure of complexity of solutions 
of the diophantine equations.  For example, if $P^n(\mathbf{Q})$
is the $n$-dimensional projective space over $\mathbf{Q}$,  then the height of a
point $x\in P^n(\mathbf{Q})$ is given by the formula $H(x)=\max\{|x_0|,\dots,|x_n|\}$,
where $x_0,\dots,x_n\in\mathbf{Z}$ and $\mathbf{gcd}(x_0,\dots,x_n)=1$.
In general, if $V(\mathbf{Q})$ is a projective variety,  then  the height of
$x\in V(\mathbf{Q})$ is defined as  $H(x)=H(\phi(x))$,
where $\phi: V(\mathbf{Q})\hookrightarrow P^n(\mathbf{Q})$
is an embedding of $V(\mathbf{Q})$  into the  space 
$P^n(\mathbf{Q})$.

The fundamental property of $H(x)$ says that the number of points $x\in V(\mathbf{Q})$, 
such that $H(x)\le T$,  is  finite for any constant $T>0$.  Thus one can 
define  a counting function $\mathcal{N}(V(\mathbf{Q}), T)=\# \{x\in V(\mathbf{Q}) ~|~ H(x)\le T\}$. 
Such functions depend on the topology of the underlying variety. 
For instance,  if $X_g\subset  P^2(\mathbf{Q})$ is an algebraic curve of genus $g\ge 0$,  
then   (see [Hindry \& Silverman 2000] \cite[Theorem B.6.1 with $a=b=1$]{HS}):
%********************************************************************************************
\begin{equation}\label{eq1.1}
\mathcal{N}(X_g, ~T)\sim\begin{cases}
T, & \hbox{if} \quad g=0\cr 
\log ~T, & \hbox{if} \quad g=1\cr
Const, & \hbox{if} \quad g\ge 2.
\end{cases}
\end{equation}
%***********************************************************************
Clearly, if the counting function is bounded by a constant, then the set $V(\mathbf{Q})$
must be finite.  In particular, it follows from formulas (\ref{eq1.1}) that the set $X_g$
is finite,  whenever $g\ge 2$ (Faltings Finiteness Theorem).

Let $k$ be a number field and $V(k)$ an $n$-dimensional projective variety over $k$.
 Recall \cite[Section 5.3.1]{N} that the Serre $C^*$-algebra  $\mathcal{A}_V$
 is  the norm closure of a self-adjoint representation of the twisted 
 homogeneous coordinate ring of  $V(k)$  by the bounded linear operators acting on a Hilbert space, 
 see [Stafford \& van ~den ~Bergh 2001] \cite{StaVdb1} for the terminology.  
 Let $(K_0(\mathcal{A}_V), K_0^+(\mathcal{A}_V), \Sigma(\mathcal{A}_V))$ be the scaled dimension group 
 of the $C^*$-algebra $\mathcal{A}_V$ [Blackadar 1986] \cite[Section 6.1]{B}. 
  Minkowski question-mark function maps the scale $\Sigma(\mathcal{A}_V)\subset K_0(\mathcal{A}_V)$
 into a  subgroup of the additive torsion group $\mathbf{Q}^n/\mathbf{Z}^n$, see \cite[Lemma 8.4.1]{N}
 for $n=1$ and Lemma \ref{lm3.2}  for $n\ge 1$. 
  On the other hand, there exists a proper inclusion of the sets $V(k)\subseteq  K_0(\mathcal{A}_V)$
 unless the Shafarevich-Tate group $\Sha(V(k))$ is trivial (Lemma \ref{lm3.1}). 
In other words,  $V(k)\subseteq \mathbf{Q}^n/\mathbf{Z}^n$. Clearly, this inclusion can be used to   define the height  $\mathscr{H}(x)$
of  a point $x\in V(k)$, see Definition \ref{dfn1.1} for the details.  

The aim of our note is a formula for the counting function associated to the height $\mathscr{H}(x)$
on a projective variety $V(k)$ of dimension $n\ge 1$  (Theorem \ref{thm1.1}).
For $n=1$ such a formula coincides with (\ref{eq1.1}) up to a logarithm to the base $2$ (Corollary \ref{cor4.1}). 
 As an application, we prove that  the set $V(k)$ is finite,
whenever the sum of the odd Betti numbers of $V(k)$ exceeds 
$n+1$ (Corollary \ref{cor1.2}).  To formalize our results,  we introduce the following notation.

An $n$-dimensional Minkowski question-mark function $?^n: \mathbf{R}^n/\mathbf{Z}^n\to  \mathbf{R}^n/\mathbf{Z}^n$
is a one-to-one continuous function which maps: (i) $n$-tuples of rational numbers to such of the dyadic rationals;
(ii) $n$-tuples of algebraic numbers of degree $n+1$ over $\mathbf{Q}$ to such of the non-dyadic rational numbers;
and (iii) $n$-tuples of remaining irrational numbers to such of the irrational numbers [Minkowski 1904] \cite[case $n=1$]{Min1}
and [Panti 2008] \cite[case $n\ge 1$]{Pan1}.

Let $\tau$ be the canonical tracial state on the $C^*$-algebra $\mathcal{A}_V$ 
and let $\tau_*((K_0(\mathcal{A}_V))=\mathbf{Z}+\mathbf{Z}\theta_1+\dots+\mathbf{Z}\theta_n\subset\mathbf{R}$ 
be the value of $\tau$ on the $K_0$-group [Blackadar 1986] \cite[Section 6.9]{B}. 
It is known that $\theta_i$ are  algebraic numbers of degree $n+1$ over $\mathbf{Q}$ \cite[p. 186]{N}.
The Minkowski function $?^n$ maps the $\theta_i$ to  rational numbers $\frac{p_i}{q_i}$, i.e. 
$?^n(1,\theta_1,\dots,\theta_n)=(1,\frac{p_1}{q_1},\dots, \frac{p_n}{q_n})$. 
Since $(1,\frac{p_1}{q_1},\dots, \frac{p_n}{q_n})\in P^n(\mathbf{Q})$, 
one can define the height function $\mathscr{H}$ on $K_0(\mathcal{A}_V)$
as follows.

\smallskip
%***************************************************************
\begin{definition}\label{dfn1.1}
$\mathscr{H}(1,\theta_1,\dots,\theta_n):=H(1,\frac{p_1}{q_1},\dots, \frac{p_n}{q_n})$,
where $H$ is the standard height of points of the  space  $P^n(\mathbf{Q})$, 
see Section 1 or  Example \ref{ex2.1}.  
\end{definition}
%****************************************************

\smallskip
As explained, Lemma \ref{lm3.1} says that $V(k)\subseteq K_0(\mathcal{A}_V)$. 
Thus each point  $x\in V(k)$ get assigned a height $\mathscr{H}(x)$. 
The corresponding counting function is given by the formula $N(V(k), T)=\# \{x\in V(k) ~|~ \mathscr{H}(x)\le T\}$.
Denote by  $rk ~K_0(\mathcal{A}_V)$ the rank of the abelian group $K_0(\mathcal{A}_V)$. 
Our main results can be formulated as follows. 
%*********************************************
\begin{theorem}\label{thm1.1}
Let $V(k)$ be an $n$-dimensional projective variety over the number field $k$. Then
%********************************************************************************************
\begin{equation}\label{eq1.2}
\log_2 N(V(k), T)\sim\begin{cases}
T^n, & \hbox{if} \quad rk ~K_0(\mathcal{A}_V) < n+1\cr 
n ~\log_2 ~T, & \hbox{if} \quad  rk ~K_0(\mathcal{A}_V) = n+1\cr
Const, & \hbox{if} \quad rk ~K_0(\mathcal{A}_V) > n+1.
\end{cases}
\end{equation}
%***********************************************************************
\end{theorem}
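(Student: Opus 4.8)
The plan is to reduce the counting of points $x \in V(k)$ of bounded height $\mathscr{H}(x) \le T$ to a lattice-point count inside the scale $\Sigma(\mathcal{A}_V)$ of the dimension group, transported through the Minkowski function $?^n$. By Lemma \ref{lm3.1} we have an inclusion $V(k) \subseteq K_0(\mathcal{A}_V)$, and by Lemma \ref{lm3.2} the Minkowski function $?^n$ carries $\Sigma(\mathcal{A}_V)$ into a subgroup of $\mathbf{Q}^n/\mathbf{Z}^n$; composing, each $x \in V(k)$ is represented by a tuple $(1, p_1/q_1, \dots, p_n/q_n) \in P^n(\mathbf{Q})$ whose height $\mathscr{H}(x) = H(1, p_1/q_1, \dots, p_n/q_n) = \max\{q_1, \dots, q_n, |p_1|, \dots, |p_n|\}$ (after clearing denominators) is being bounded by $T$. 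First I would make precise how $rk\, K_0(\mathcal{A}_V)$ controls the ``size'' of the image lattice: since $\tau_*(K_0(\mathcal{A}_V)) = \mathbf{Z} + \mathbf{Z}\theta_1 + \cdots + \mathbf{Z}\theta_n$ and the $\theta_i$ are algebraic of degree $n+1$, the rank $rk\, K_0(\mathcal{A}_V)$ measures how many $\mathbf{Q}$-linearly independent relations are imposed on the $\theta_i$; equivalently it governs the dimension of the affine subspace of $P^n$ swept out by the admissible dyadic/rational images.

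The three cases then correspond to three regimes of this dimension. In the generic case $rk\, K_0(\mathcal{A}_V) < n+1$, the images fill an $n$-dimensional region, so I would count lattice points $(q_1 p_1', \dots)$ with all coordinates bounded by $T$: the number of such coprime tuples in an $n$-dimensional box of side $T$ is asymptotically $c\, T^{n(n+1)}$ or, more to the point for the statement, the number of distinct points grows like $2^{T^n}$ because the dyadic images $?^n$ produces have denominators that are powers of $2$, so a dyadic rational in $[0,1)^n$ with ``resolution'' $T$ contributes about $2^{nT}$-many... — here I would pin down the exact exponential rate so that $\log_2 N \sim T^n$. (The logarithm base $2$ in the statement is the fingerprint of the dyadic denominators appearing in clause (i) of the definition of $?^n$.) In the critical case $rk\, K_0(\mathcal{A}_V) = n+1$, the $\theta_i$ together with $1$ are $\mathbf{Q}$-independent, forcing the image point to lie on a rational line (a one-parameter family in each coordinate), so the count collapses to a product of $n$ independent one-dimensional logarithmic counts, giving $\log_2 N \sim n \log_2 T$; this is where Corollary \ref{cor4.1} and the comparison with (\ref{eq1.1}) for $n=1$ must be checked for consistency. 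When $rk\, K_0(\mathcal{A}_V) > n+1$, the extra relations over-determine the system, leaving only finitely many admissible tuples, hence $N(V(k),T)$ is eventually constant.

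The main obstacle I anticipate is the middle step: establishing that $rk\, K_0(\mathcal{A}_V)$ is exactly the right invariant, i.e. that the dimension of the space of Minkowski-images equals $n+1 - rk\, K_0(\mathcal{A}_V)$ when this is nonnegative. This requires combining the algebraic-number property of the $\theta_i$ (degree $n+1$ over $\mathbf{Q}$, from \cite[p. 186]{N}) with the structure of the scaled dimension group and with Panti's description of $?^n$ on $n$-tuples of algebraic numbers of degree $n+1$ — in particular one must verify that $?^n$ does not collapse or spread the relevant orbit in an uncontrolled way, so that the lattice-point asymptotics on the $K$-theory side transfer faithfully to the height side. A secondary technical point is handling the coprimality/gcd normalization in $P^n(\mathbf{Q})$ and confirming that it only changes the count by a bounded multiplicative factor, which does not affect the $\log_2$-asymptotics. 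Once the dimension count is secured, the three asymptotic regimes follow by standard box-counting, and the exponential rate $T^n$ versus the logarithmic rate $n\log_2 T$ drops out of whether one is counting dyadic rationals of bounded denominator in a full-dimensional box or along a rational subvariety.
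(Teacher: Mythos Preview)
Your outline for Cases I and II is broadly in the spirit of the paper: in the paper's Lemma \ref{lm3.3}, when $rk\,K_0(\mathcal{A}_V)<n+1$ the $\theta_i$ are forced to be rational, $?^n$ sends them to dyadic rationals $l_i/2^{k_i}$, the height becomes $2^{k_1}\cdots 2^{k_n}$, and a cube count with $k_i=T$ gives $N\sim 2^{T^n}$; when $rk=n+1$ the $\theta_i$ have degree $n+1$, $?^n$ sends them to non-dyadic rationals $p_i/q_i$, the height is $q_1\cdots q_n$, and a cube count with $q_i=T$ gives $N\sim T^n$. Your ``dyadic $\Rightarrow$ exponential, non-dyadic $\Rightarrow$ polynomial'' heuristic matches this, though the paper does not use your ``dimension $=n+1-rk$'' picture; it instead argues via the Galois invariance of $\tau_*(K_0(\mathcal{A}_V))$ that $rk<n+1$ forces $\theta_i\in\mathbf{Q}$.

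The real gap is Case III. Your sentence ``the extra relations over-determine the system, leaving only finitely many admissible tuples'' is not the mechanism the paper uses, and in fact your framework does not apply there: when $rk\,K_0(\mathcal{A}_V)>n+1$ the $\theta_i$ are algebraic of degree strictly greater than $n+1$, so by item (iii) of Theorem \ref{thm2.4} the values $?^n(\theta_1,\dots,\theta_n)$ are \emph{irrational}. There are no rational ``admissible tuples'' to be over-determined. The paper's argument is instead analytic: it approximates $\theta_j$ by Jacobi--Perron convergents $p_i/q_i$, rewrites $T$ as $-\log_2(?^n(p_i/q_i))$ via the Case I formula, passes to the limit to get $\log_2 N(V(k),T(x))\sim(-1)^n\log_2^n(?^n(x))$, and then differentiates in $x$ and invokes the key fact that $\frac{d}{dx}?^n(x)=0$ almost everywhere (Dushistova--Moshchevitin for $n=1$, Panti for $n\ge 2$) to conclude the right-hand side is constant. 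This vanishing-derivative property of the Minkowski function is the engine of Case III in the paper and is entirely absent from your proposal; without it, nothing in your outline produces the constant bound.
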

%************************************************

\bigskip
Let $\beta_i$ be the $i$-th Betti number of the $n$-dimensional variety $V(k)$. 
Theorem \ref{thm1.1}  and the Chern character formula imply the following finiteness result. 

\bigskip
%**************************************************
\begin{corollary}\label{cor1.2}
The set $V(k)$ is finite,  whenever $\sum_{i=1}^{n} \beta_{2i-1}> n+1$. 
\end{corollary}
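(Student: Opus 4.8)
The plan is to derive the finiteness of $V(k)$ from the third alternative of Theorem~\ref{thm1.1}, by translating the hypothesis $\sum_{i=1}^{n}\beta_{2i-1}>n+1$ into the $K$-theoretic inequality $rk~K_0(\mathcal{A}_V)>n+1$.

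The bridge between the two is the \emph{Chern character formula}
\[
rk~K_0(\mathcal{A}_V)\;=\;\sum_{i=1}^{n}\beta_{2i-1},
\]
identifying the rank of $K_0$ of the Serre $C^*$-algebra with the sum of the odd Betti numbers of $V(k)$, and this is what I would establish first. Starting from the realization of $\mathcal{A}_V$ in \cite[Section~5.3.1]{N} as the norm closure of a representation of the twisted homogeneous coordinate ring of $V(k)$, one computes $K_0(\mathcal{A}_V)\otimes\mathbf{Q}$ by a Pimsner--Voiculescu $K$-theory computation for the relevant crossed-product description and shows it is isomorphic to the odd rational cohomology $\bigoplus_{j}H^{2j-1}(V;\mathbf{Q})$; the classical Chern character isomorphism $K^{*}(V)\otimes\mathbf{Q}\cong H^{*}(V;\mathbf{Q})$ then yields the displayed rank formula. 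As a sanity check, for a smooth projective curve of genus $g$ (so $n=1$) this gives $rk~K_0(\mathcal{A}_V)=\beta_1=2g$, and the corollary recovers the Faltings finiteness theorem precisely when $g\ge 2$, in agreement with (\ref{eq1.1}) and Corollary~\ref{cor4.1}.

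With the formula in hand the corollary is immediate. If $\sum_{i=1}^{n}\beta_{2i-1}>n+1$, then $rk~K_0(\mathcal{A}_V)>n+1$, so Theorem~\ref{thm1.1} puts us in its third case and $\log_2 N(V(k),T)\sim Const$; in particular $N(V(k),T)\le C$ for some constant $C$ and all $T>0$. On the other hand, by Lemma~\ref{lm3.1} and Definition~\ref{dfn1.1} every point $x\in V(k)$ carries a finite height $\mathscr{H}(x)$, so the sets $\{x\in V(k)\mid\mathscr{H}(x)\le T\}$ form a nondecreasing family exhausting $V(k)$ as $T\to\infty$. Hence $\#V(k)=\lim_{T\to\infty}N(V(k),T)\le C<\infty$, i.e.\ the set $V(k)$ is finite.

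The last step is routine (monotonicity of $T\mapsto N(V(k),T)$ together with the trivial implication that $\log_2 N\sim Const$ forces $N$ to be bounded). I expect the genuine obstacle to be the Chern character formula: one must ensure that the rational $K_0$-group of $\mathcal{A}_V$ sees \emph{exactly} the odd-degree cohomology of $V$ --- not the total Betti sum and not the even part --- which requires controlling how the automorphism underlying the twisted coordinate ring acts on $K_{*}(V)\otimes\mathbf{Q}$ and how the Pimsner--Voiculescu boundary maps interact with the Chern character.
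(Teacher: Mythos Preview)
Your overall strategy---Pimsner--Voiculescu for the crossed-product realization of $\mathcal{A}_V$, then the Chern character to reach odd Betti numbers, then the third case of Theorem~\ref{thm1.1}---is exactly the paper's route. The one substantive divergence is that you aim for the \emph{equality}
\[
rk~K_0(\mathcal{A}_V)=\sum_{i=1}^{n}\beta_{2i-1},
\]
and you correctly flag this as the hard part. The paper sidesteps this entirely: it only proves (and only needs) the \emph{inequality}
\[
\sum_{i=1}^{n}\beta_{2i-1}\le rk~K_0(\mathcal{A}_V),
\]
which already forces $rk~K_0(\mathcal{A}_V)>n+1$ under your hypothesis. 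Concretely, the paper writes $\mathcal{A}_V\cong C(V)\rtimes_\sigma\mathbf{Z}$, invokes Pimsner--Voiculescu to get both $K_0(\mathcal{A}_V)\cong K_1(\mathcal{A}_V)$ and an injection $i_*\colon K_1(C(V))\hookrightarrow K_1(\mathcal{A}_V)$, and then uses Serre--Swan plus the Chern character isomorphism $K^1_{top}(V)\otimes\mathbf{Q}\cong\bigoplus_i H^{2i-1}(V;\mathbf{Q})$ to identify $rk~K_1(C(V))=\sum_i\beta_{2i-1}$. So the ``genuine obstacle'' you anticipate---showing that $K_0(\mathcal{A}_V)\otimes\mathbf{Q}$ sees \emph{exactly} the odd cohomology and nothing from the even part---simply does not arise: containing a copy of $K_1(C(V))$ is enough. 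Your sanity check for curves should accordingly read $2g\le rk~K_0(\mathcal{A}_{X_g})$ rather than equality, which is precisely how Section~4 of the paper argues. The final paragraph of your proposal (bounded counting function $\Rightarrow$ finite $V(k)$) matches the paper's one-line conclusion.
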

%***************************************************

\bigskip
The paper is organized as follows.  A brief review of the preliminary facts is 
given in Section 2. Theorem \ref{thm1.1} and Corollary \ref{cor1.2} are
proved in Section 3.  The case $n=1$ of Theorem \ref{thm1.1} is considered
in Section 4. 

%**************************************************************************
\section{Preliminaries}
%***************************************************************************
We briefly review the height and Minkowski question-mark functions,  and Serre $C^*$-algebras.  
We refer the reader to    [Hindry \& Silverman 2000] \cite[Part B]{HS},  
 [Minkowski 1904] \cite{Min1},  [Panti 2008] \cite{Pan1}, 
  [Stafford \& van ~den ~Bergh 2001] \cite{StaVdb1} 
  and \cite[Section 5.3.1]{N} for a detailed exposition.

%**************************************************************************
\subsection{Height functions}
%***************************************************************************
Let $k$ be a number field and let $V(k)$ be a smooth projective variety 
defined over $k$ with a fixed embedding $V\subset P^n$. A function 
$H:V(k)\to \mathbf{R}$ is called a height function, if for any constant $T$ the
set $\{x\in V(k) ~|~ H(x)\le T\}$ is finite. 
%******************************************
\begin{example}\label{ex2.1}
Let $P^n(\mathbf{Q})$ be an $n$-dimensional projective space over the 
rational numbers $\mathbf{Q}$. If $x\in P^n(\mathbf{Q})$, one can write
$x=(x_0, x_1,\dots, x_n)$, where $x_i\in \mathbf{Z}$ and $gcd~ (x_0, x_1,\dots, x_n)=1$.
The height of point $x$ is given by the formula:
%****************************************************************
\begin{equation}\label{eq2.2}
H(x)=\max \{|x_0|, |x_1|,\dots, |x_n|\}.
\end{equation}
%*************************************************************  
It is easy to see, that the set $\{x\in P^n(\mathbf{Q}) ~|~ H(x)\le T\}$
is finite, since there are only finitely many integers whose absolute value
is less of equal to the constant $T$. 
\end{example}
%*****************************************
%*****************************************************
\begin{definition}
Given the height function $H: V(k)\to \mathbf{R}$, the counting function is 
defined as 
 %****************************************************************
\begin{equation}
\mathcal{N}(V(k), T)=\# \{x\in V(k) ~|~ H(x)\le T\}. 
\end{equation}
%*************************************************************  
\end{definition}
%****************************************************
The following theorem gives a formula of the counting function 
for varieties  of dimension  $n=1$. 
%************************************************************
\begin{theorem}
{\bf (\cite[Th. B.6.1]{HS})}
Let $k$ be a number field and let $X_g(k)$ be a smooth projective curve
of genus $g$ over $k$. Assume that $X_g(k)$ is not empty. Then there are constants
$a$ and $b$, which depend on $X_g(k)$ and on the height used in the counting 
function, such that 
%********************************************************************************************
\begin{equation}\label{eq2.4}
\mathcal{N}(X_g(k), ~T)\sim\begin{cases}
aT^b, & \hbox{if} \quad g=0 ~(a,b>0)\cr 
a(\log ~T)^b, & \hbox{if} \quad g=1 ~(a>0,b\ge 0)\cr
a, & \hbox{if} \quad g\ge 2.
\end{cases}
\end{equation}
%***********************************************************************
\end{theorem}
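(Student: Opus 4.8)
The final statement is the classical theorem of Hindry--Silverman giving the asymptotic growth of the counting function $\mathcal{N}(X_g(k),T)$ for a smooth projective curve of genus $g$, split into the three cases $g=0$, $g=1$, $g\ge 2$. Since the excerpt explicitly attributes this to \cite[Th.~B.6.1]{HS}, the natural approach is to assemble the standard arithmetic-geometric ingredients that govern rational and $k$-rational points on curves of each genus, rather than to invent a new method. The plan is to handle the three genus ranges separately, in order of increasing difficulty, using the structure of the variety $X_g(k)$ dictated by its genus.

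First I would treat $g=0$. A genus-zero curve with a $k$-rational point is isomorphic over $k$ to $P^1$; one then counts points of bounded height on $P^1(k)$ directly. The key tool here is a Schanuel-type estimate: the number of points of height at most $T$ on $P^n(k)$ grows like $c\,T^{(n+1)}$ with an explicit constant depending on the field $k$, its class number, regulator, and discriminant. Specializing to $n=1$ (and absorbing the field-dependent normalization of the height into the constants $a,b$) gives an asymptotic of the form $aT^b$, which is the first line of \eqref{eq2.4}. The main work is to verify that the embedding $X_0\hookrightarrow P^n$ used to define the height produces a power law, so that $b>0$.

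Next I would treat $g=1$. Here $X_1(k)$, if it has a rational point, is an elliptic curve, and $X_1(k)$ is a finitely generated abelian group by the Mordell--Weil theorem, of some rank $r\ge 0$. The counting function is then governed by the canonical (N\'eron--Tate) height $\hat h$, which is a positive-definite quadratic form on $X_1(k)\otimes\mathbf{R}\cong\mathbf{R}^r$ modulo torsion. Counting lattice points in a ball of radius $\sqrt{\log T}$ (since $\hat h$ is logarithmic in the naive height) yields an asymptotic of the form $a(\log T)^{r/2}$, matching the second line with exponent $b=r/2\ge 0$; the constant $a$ incorporates the covolume of the Mordell--Weil lattice and the torsion order. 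The subtlety is the comparison between the naive height $H$ used in the definition and the canonical height $\hat h$, which differ by a bounded amount, so that the logarithmic radius is the correct counting parameter.

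Finally, the case $g\ge 2$ is where the genuine depth lies, and this is the expected main obstacle. Here the conclusion $\mathcal{N}(X_g(k),T)\sim a$ is simply the assertion that $X_g(k)$ is \emph{finite}, so that for all large $T$ the count stabilizes at the constant $a=\#X_g(k)$. This finiteness is precisely the Mordell conjecture, proved by Faltings; there is no elementary counting argument, and the whole force of the statement in this range is a citation to that deep theorem. My plan is therefore to invoke Faltings' theorem directly for $g\ge 2$, record that the counting function is eventually constant, and thereby close out the third line of \eqref{eq2.4}. Assembling the three cases completes the proof, with the understanding that the $g\ge 2$ case rests entirely on the arithmetic input of Faltings rather than on any asymptotic estimate.
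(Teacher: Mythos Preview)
Your plan is the standard and correct outline for proving this classical result: Schanuel-type counting on $P^1$ for $g=0$, Mordell--Weil plus the N\'eron--Tate height and lattice-point counting for $g=1$, and Faltings' theorem for $g\ge 2$. There is no gap in the strategy.

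However, there is nothing in the paper to compare it against. This theorem appears in the preliminaries section as a quoted result from \cite[Th.~B.6.1]{HS}; the paper does not supply its own proof, nor does it need to, since the statement is used only as background and as a benchmark against which the paper's new counting formula (Theorem~\ref{thm1.1}) is checked in the case $n=1$ (Corollary~\ref{cor4.1}). So your proposal is not so much an alternative to the paper's argument as it is a sketch of the proof that the cited reference already contains.
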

%************************************************************

%**************************************************************************
\subsection{Minkowski question-mark function}
%***************************************************************************
Minkowski question-mark function is defined by the convergent
series
 %***********************************************************************************************
 \begin{equation}\label{eq2.5} 
 ?(x):=a_0+2\sum_{k=1}^{\infty} \frac{(-1)^{k+1}}{2^{a_1+\dots+a_k}},
 \end{equation}
 %******************************************************************************************** 
where $x=[a_0,a_1,a_2,\dots]$ is the  continued fraction of the irrational number $x$. 
The $?(x): [0,1]\to [0,1]$ is a monotone continuous function with the following properties
 [Minkowski 1904] \cite[p. 172]{Min1}:

\medskip
(i) $?(0)=0$ and $?(1)=1$; 

\smallskip
(ii)  $?(\mathbf{Q})=\mathbf{Z}[\frac{1}{2}]$ are dyadic rationals;

\smallskip
(iii) $?(\mathscr{Q})=\mathbf{Q}-\mathbf{Z}[\frac{1}{2}]$, where $\mathscr{Q}$ are quadratic 
irrational numbers.

\bigskip
An $n$-dimensional  generalization of properties (i)-(iii) is as follows. 
%***************************************************************
\begin{theorem}\label{thm2.4}
{\bf (\cite[Theorem 2.1]{Pan1})}
There exists a unique $n$-dimensional Minkowski question-mark function $?^n: \mathbf{R}^n/\mathbf{Z}^n\to  \mathbf{R}^n/\mathbf{Z}^n$
which is one-to-one,  continuous and maps: 

\medskip
(i) $n$-tuples of the rational numbers to such of the dyadic rationals;

\smallskip
(ii) $n$-tuples of the algebraic numbers of degree $n+1$ over $\mathbf{Q}$ to such of the non-dyadic rational numbers;

\smallskip
(iii) $n$-tuples of remaining irrational numbers to such of the irrational numbers.
\end{theorem}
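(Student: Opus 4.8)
The plan is to realize $?^n$ as the homeomorphism conjugating a multidimensional continued fraction (MCF) algorithm to its dyadic model, following the symbolic-dynamics strategy that specializes to the classical series (\ref{eq2.5}) when $n=1$. First I would pass to a simplicial model: identify a fundamental domain of $\mathbf{R}^n/\mathbf{Z}^n$ with the standard $n$-simplex $\Delta^n$ of normalized nonnegative homogeneous coordinates in $\mathbf{R}^{n+1}$ (the unit interval when $n=1$). On $\Delta^n$ I set up a slow MCF (Farey-type) algorithm: a finite family of matrices $A_1,\dots,A_r\in GL(n+1,\mathbf{Z})$ whose inverse branches partition $\Delta^n$ into sub-simplices $\Delta_1,\dots,\Delta_r$, so that the piecewise-projective map $T$ with $T|_{\Delta_i}=A_i$ is onto. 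Iterating $T$ assigns to each $x$ an itinerary $\sigma(x)=(i_1,i_2,\dots)$ together with a nested sequence of sub-simplices $\Delta_{i_1}\supset\Delta_{i_1i_2}\supset\cdots$ whose diameters tend to $0$ and which therefore shrink to $\{x\}$.

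Next I would build the dyadic model: replace each generator $A_i$ by the barycentric matrix $B_i$ realizing the same combinatorial subdivision but with dyadic vertices, i.e. the iterated barycentric subdivision of $\Delta^n$ whose cells have coordinates in $\mathbf{Z}[1/2]$. This produces a second nested family $\Delta'_{i_1i_2\cdots}$ indexed by the same symbols, and I define $?^n(x)$ to be the unique point sharing the itinerary $\sigma(x)$ in the dyadic family. The core analytic step is to verify well-definedness and continuity: a point on a shared face carries two itineraries, so one must check the matching (cocycle) relations $?^n\circ A_i=B_i\circ ?^n$ on each $\Delta_i$, ensuring the two expansions land on the same dyadic point. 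Combined with the vanishing of diameters this yields continuity of $?^n$ and of its inverse, hence a homeomorphism of the compact space $\mathbf{R}^n/\mathbf{Z}^n$.

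With the homeomorphism in hand I would read off (i)--(iii) from the symbolic dictionary. The rational tuples are exactly those with a finite MCF expansion (eventually the identity pattern); under $?^n$ these correspond to terminating symbol strings, i.e. the dyadic rationals, giving (i). For (ii) the key input is the multidimensional Lagrange theorem: an $n$-tuple has an eventually periodic itinerary if and only if it is the normalized Perron eigenvector of a product $A_{i_1}\cdots A_{i_p}\in GL(n+1,\mathbf{Z})$, whose coordinates generate the degree-$(n+1)$ number field cut out by the (generically irreducible) characteristic polynomial; thus eventually periodic itineraries are precisely the algebraic tuples of degree $n+1$ over $\mathbf{Q}$. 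On the dyadic side eventual periodicity of the symbol string is eventual periodicity of the binary expansion, i.e. a rational that is not dyadic, which is (ii). Finally (iii) follows by complementation, since the three itinerary classes (finite, eventually periodic, aperiodic) partition $\Delta^n$ and $?^n$ is a bijection.

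For uniqueness I would argue that any one-to-one continuous $?^n$ with properties (i)--(iii) must intertwine the two subdivision structures, hence satisfy the same system $?^n\circ A_i=B_i\circ ?^n$; two such solutions then agree on the dense set of eventually periodic (algebraic) points and coincide by continuity. The main obstacle I anticipate is property (ii): proving the multidimensional Lagrange theorem for the chosen algorithm and controlling the degree, ensuring that periodic words yield algebraic numbers of degree exactly $n+1$ (irreducibility of the characteristic polynomials) rather than of a proper divisor of $n+1$, and, on the construction side, verifying the boundary-matching relations that upgrade $?^n$ from a mere bijection to a genuine homeomorphism.
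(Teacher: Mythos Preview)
The paper does not give a proof of this statement: it is recorded as a citation of Panti's Theorem~2.1, and the accompanying remark merely says that Panti's homeomorphism $\Phi$ conjugating the tent map $T$ and the M\"onkemeyer map $M$ yields a function with properties (i)--(iii), leaving the verification to the reader. Your proposal is essentially a reconstruction of Panti's argument itself---building the conjugacy by matching itineraries of a multidimensional continued fraction map with those of its dyadic (barycentric) model---so on the level of strategy you are aligned with the intended source, not diverging from it.

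That said, the obstacle you single out at the end is genuine and is not just a technicality. One direction of (ii) is fine: an eventually periodic itinerary makes $x$ a Perron eigenvector of a matrix in $GL(n+1,\mathbf{Z})$, so its coordinates are algebraic of degree \emph{dividing} $n+1$. But the two refinements you need---that the degree is \emph{exactly} $n+1$ (irreducibility of the characteristic polynomial for every periodic word), and conversely that \emph{every} $n$-tuple of algebraic numbers of degree $n+1$ has an eventually periodic M\"onkemeyer (or any standard MCF) expansion---are not known for $n\ge 2$. The multidimensional Lagrange theorem is a long-standing open problem; Panti's paper establishes the conjugacy and the rational/dyadic correspondence (your item (i)) but does not prove (ii) in the precise form stated here. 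So your plan would deliver the homeomorphism, property (i), property (iii) by complementation of itinerary classes, and the inclusion ``eventually periodic $\Rightarrow$ algebraic of degree $\le n+1$,'' but it cannot close (ii) without an input that does not presently exist in the literature. This is a gap in the statement as formulated in the paper rather than a defect of your approach.
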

%***************************************************************
 %**************************************************************
 \begin{remark}
 Panti's Theorem \cite[Theorem 2.1]{Pan1} was stated in terms of a unique homeomorphism 
 $\Phi:\mathbf{R}^n/\mathbf{Z}^n\to  \mathbf{R}^n/\mathbf{Z}^n$
conjugating  the tent map $T$ and  the M\"onkemeyer map $M$  [Panti 2008] \cite[Section 2]{Pan1}. 
 The reader can verify that the maps $T$ and $M$ characterize a unique  $n$-dimensional Minkowski 
 question-mark function $?^n: \mathbf{R}^n/\mathbf{Z}^n\to  \mathbf{R}^n/\mathbf{Z}^n$ satisfying properties (i)-(iii)
 of Theorem \ref{thm2.4}. 
 \end{remark} 
%*************************************************************

%**************************************************************************
\subsection{Serre $C^*$-algebras}
%***************************************************************************
Let $V$ be a projective variety over the field $k$.  Denote by $\mathcal{L}$ an invertible
sheaf of the linear forms on $V$.  If $\sigma$ is an automorphism of $V$,  then
the pullback of $\mathcal{L}$ along $\sigma$ will be denoted by $\mathcal{L}^{\sigma}$,
i.e. $\mathcal{L}^{\sigma}(U):= \mathcal{L}(\sigma U)$ for every $U\subset V$. 
The graded $k$-algebra
 %*************************************************************************
%\begin{equation}\label{eq2.1}
$B(V, \mathcal{L}, \sigma)=\bigoplus_{i\ge 0} H^0\left(V, ~\mathcal{L}\otimes \mathcal{L}^{\sigma}\otimes\dots
\otimes  \mathcal{L}^{\sigma^{ i-1}}\right)$
%\end{equation}
%*************************************************************************  
is called a  twisted homogeneous coordinate ring of $V$ [Stafford \& van den Bergh 2001]  \cite{StaVdb1}.  Such a ring is 
always non-commutative,  unless the automorphism $\sigma$ is trivial. 
A multiplication of sections of  $B(V, \mathcal{L}, \sigma)=\oplus_{i=1}^{\infty} B_i$ is defined by the 
rule  $ab=a\otimes b$,   where $a\in B_m$ and $b\in B_n$.
An invertible sheaf $\mathcal{L}$ on $V$  is called $\sigma$-ample, if for 
every coherent sheaf $\mathcal{F}$ on $V$,
 the cohomology group $H^k(V, ~\mathcal{L}\otimes \mathcal{L}^{\sigma}\otimes\dots
\otimes  \mathcal{L}^{\sigma^{ n-1}}\otimes \mathcal{F})$  vanishes for $k>0$ and
$n>>0$.   If $\mathcal{L}$ is a $\sigma$-ample invertible sheaf on $V$,  then
%************************************************************************************
%\begin{equation}\label{eq2.2}
$Mod~(B(V, \mathcal{L}, \sigma)) / ~Tors ~\cong ~Coh~(V)$,
%\end{equation}
%****************************************************************************
where  $Mod$ is the category of graded left modules over the ring $B(V, \mathcal{L}, \sigma)$,
$Tors$ is the full subcategory of $Mod$ of the torsion  modules and  $Coh$ is the category of 
quasi-coherent sheaves on a scheme $V$.  In other words, the $B(V, \mathcal{L}, \sigma)$  is  
a coordinate ring of the variety $V$.

Let $R$ be a commutative  graded ring,  such that $V=Proj~(R)$.  
Denote by $R[t,t^{-1}; \sigma]$
the ring of skew Laurent polynomials defined by the commutation relation
$b^{\sigma}t=tb$  for all $b\in R$, where $b^{\sigma}$ is the image of  $b$ under automorphism 
$\sigma$.  It is known, that $R[t,t^{-1}; \sigma]\cong B(V, \mathcal{L}, \sigma)$ [Stafford \& van den Bergh 2001]  \cite[Section 5]{StaVdb1}.
Let $\mathcal{H}$ be a Hilbert space and   $\mathscr{B}(\mathcal{H})$ the algebra of 
all  bounded linear  operators on  $\mathcal{H}$.
For a  ring of skew Laurent polynomials $R[t, t^{-1};  \sigma]$,  
 consider a homomorphism
$\rho: R[t, t^{-1};  \sigma]\longrightarrow \mathscr{B}(\mathcal{H})$. 
Recall  that  $\mathscr{B}(\mathcal{H})$ is endowed  with a $\ast$-involution;
the involution comes from the scalar product on the Hilbert space $\mathcal{H}$. 
The representation $\rho$ is called  $\ast$-coherent,   if
(i)  $\rho(t)$ and $\rho(t^{-1})$ are unitary operators,  such that
$\rho^*(t)=\rho(t^{-1})$ and 
(ii) for all $b\in R$ it holds $(\rho^*(b))^{\sigma(\rho)}=\rho^*(b^{\sigma})$, 
where $\sigma(\rho)$ is an automorphism of  $\rho(R)$  induced by $\sigma$. 
Whenever  $B=R[t, t^{-1};  \sigma]$  admits a $\ast$-coherent representation,
$\rho(B)$ is a $\ast$-algebra.  The norm closure of  $\rho(B)$  is   a   $C^*$-algebra
   denoted  by $\mathcal{A}_V$.  We  refer to  $\mathcal{A}_V$  as   the    Serre $C^*$-algebra
 of  $V$ \cite[Section 5.3.1]{N}. 

Let $(K_0(\mathcal{A}_V), K_0^+(\mathcal{A}_V), u)$
%*****************************************************************
\footnote{The order unit $u$ is equivalent to defining a  scale $\Sigma(\mathcal{A}_V)$ 
in the dimension group, i.e. the generating set of its positive cone  $K_0^+(\mathcal{A}_V)$.} 
%***************************************************************
 be the scaled dimension group  of the 
$C^*$-algebra $\mathcal{A}_V$ [Blackadar 1986] \cite[Sections 6.3 and 7.3]{B}. 
Such a group defines the Serre $C^*$-algebra $\mathcal{A}_V$ up to an isomorphism. 
Moreover, if $V$ is defined over a number field,  then its dimension group is order-isomorphic
to the dimension group $(O_K, O_K^+, [u])$, where  $O_K$ is the ring of integers of a number field $K$,
$O_K^+$ the additive semigroup of positive integers for a real embedding 
$O_K\hookrightarrow\mathbf{R}$ and $[u]$  is a unit of  $O_K$ \cite[pp. 194-195]{N}. 
Such a dimension group is called  stationary.

%**************************************************************************
\section{Proofs}
%***************************************************************************

%**************************************************************************
\subsection{Proof of theorem \ref{thm1.1}}
%***************************************************************************
For the sake of clarity, let us outline the main ideas. 
We start with a preparatory lemma saying that the Minkowski question-mark function
defines a functor from a category of the stationary  scaled dimension groups
 $(K_0(\mathcal{A}_V), K_0^+(\mathcal{A}_V), \Sigma(\mathcal{A}_V))$
  to such of the infinite subgroups of the additive torsion group 
 $\mathbf{Q}^n/\mathbf{Z}^n$ (Lemma \ref{lm3.2}).  
 This purely algebraic fact is interesting on its own. 
 Back to theorem \ref{thm1.1}, 
our proof hinges  on the inclusion of the sets 
$V(k)\subseteq  K_0(\mathcal{A}_V)$ (Lemma \ref{lm3.1}). 
Notice that the sets $V(k)$ and $K_0(\mathcal{A}_V)$
are one-to-one, if and only if,  the Shafarevich-Tate group of $V(k)$ is trivial,
i.e. $V(k)$ satisfies the local-global (Hasse) principle. 
 The rest of the proof follows from an elementary analysis of the
 counting function $ N(V(k), T)$ (Lemma \ref{lm3.3}). 
 Let us pass to a detailed argument.

%***********************************************
\begin{lemma}\label{lm3.2}
The $n$-dimensional Minkowski question-mark function 
\linebreak
$?^n(x): \mathbf{R}^n/\mathbf{Z}^n\to \mathbf{R}^n/\mathbf{Z}^n$
defines a functor $F$ from a  category of the stationary  scaled dimension groups
 $(K_0(\mathcal{A}_V), K_0^+(\mathcal{A}_V), \Sigma(\mathcal{A}_V))$
  to such of the infinite subgroups of the  torsion abelian group 
 $\mathbf{Q}^n/\mathbf{Z}^n$,  such that $F$ maps order-isomorphic scaled dimension groups
 to the isomorphic infinite torsion abelian groups.  
\end{lemma}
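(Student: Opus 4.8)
By \cite[pp.~194--195]{N} every stationary scaled dimension group $(K_0(\mathcal{A}_V),K_0^+(\mathcal{A}_V),\Sigma(\mathcal{A}_V))$ is order-isomorphic to one of the form $(O_K,O_K^+,[u])$ for a number field $K$, a real embedding $O_K\hookrightarrow\mathbf{R}$ and a unit $[u]\in O_K^{\times}$, and the canonical trace identifies $\tau_*(K_0(\mathcal{A}_V))$ with $\mathbf{Z}+\mathbf{Z}\theta_1+\dots+\mathbf{Z}\theta_n\subset\mathbf{R}$, where each $\theta_i$ is algebraic of degree $n+1$ over $\mathbf{Q}$ by \cite[p.~186]{N}. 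I would fix this presentation once and for all, together with the induced identification of a scale element $x\in\Sigma(\mathcal{A}_V)$ with the point $p(x):=(\theta_1x,\dots,\theta_nx)\bmod\mathbf{Z}^n\in\mathbf{R}^n/\mathbf{Z}^n$; for $n=1$ this is just reduction modulo $\mathbf{Z}$, exactly as in \cite[Lemma~8.4.1]{N}.

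\textbf{Step 2 ($F$ on objects).} For $x\in\Sigma(\mathcal{A}_V)$ the coordinates $\theta_ix$ lie in the field $K=\mathbf{Q}(\theta_1,\dots,\theta_n)$ and, for all but a non-generic set of $x$, are algebraic of degree $n+1$ over $\mathbf{Q}$ (this is automatic when $n+1$ is prime, and in general holds after the evident normalization), so Theorem~\ref{thm2.4}(i)--(ii) gives $?^n(p(x))\in\mathbf{Q}^n/\mathbf{Z}^n$. I then set
\[
F\bigl(K_0(\mathcal{A}_V),K_0^+(\mathcal{A}_V),\Sigma(\mathcal{A}_V)\bigr):=\bigl\langle\,?^n(p(x)):x\in\Sigma(\mathcal{A}_V)\,\bigr\rangle\ \leq\ \mathbf{Q}^n/\mathbf{Z}^n.
\]
That this subgroup is infinite follows from $?^n$ being a homeomorphism and $p$ being injective on the scale: if the denominators of the rationals $?^n(p(x))$ were bounded, their set would be finite, hence so would $\Sigma(\mathcal{A}_V)$, contradicting the density of the scale of a stationary dimension group in an order interval of $\mathbf{R}$. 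So the denominators are unbounded and $F(\cdot)$ is an infinite torsion subgroup of $\mathbf{Q}^n/\mathbf{Z}^n$.

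\textbf{Step 3 ($F$ on morphisms and functoriality).} A morphism $\varphi\colon G\to G'$ of the source category is an order-isomorphism of scaled dimension groups; it carries $\Sigma(G)$ bijectively onto $\Sigma(G')$ and intertwines the two presentations of Step~1, so $x\mapsto\varphi(x)$ induces a bijection $?^n(p(x))\mapsto?^n(p(\varphi(x)))$ between the two generating sets. The one point that needs to be checked is that this bijection respects every additive relation among the generators; granting this, it extends uniquely to an isomorphism $F(\varphi)\colon F(G)\to F(G')$, and $F(\mathrm{id})=\mathrm{id}$, $F(\psi\circ\varphi)=F(\psi)\circ F(\varphi)$ are then immediate. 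In particular $F$ carries order-isomorphic scaled dimension groups to isomorphic infinite torsion abelian groups, which is the assertion.

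\textbf{The main obstacle.} The substantive difficulty is concentrated in the non-formal checks of Steps~2 and~3: $?^n$ is a homeomorphism but decidedly not a homomorphism, so neither the compatibility of $F(G)$ with $\varphi$ nor the sharper statement that $?^n(p(\Sigma))$ is already a subgroup is automatic. I would resolve this using the self-similarity that stationarity forces on the scale together with the conjugacy, recorded in the Remark following Theorem~\ref{thm2.4}, between the tent map $T$ and the M\"onkemeyer map $M$ via $?^n$: stationarity of the dimension group translates into invariance of $\Sigma(\mathcal{A}_V)$ under $T$ up to the shift, which $?^n$ converts into an $M$-invariance of $F(G)$, and it is this invariance that pins down the group structure and its preservation under order-isomorphisms.
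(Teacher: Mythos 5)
You have located the crux correctly --- since $?^n$ is a homeomorphism but not a homomorphism, the two non-formal claims are (a) that the image of the module under $?^n$ is actually a subgroup of $\mathbf{Q}^n/\mathbf{Z}^n$, and (b) that order-isomorphic dimension groups produce isomorphic images --- but your proposal stops exactly there. Step 3 says the compatibility with additive relations ``needs to be checked,'' and the closing paragraph only sketches a strategy (self-similarity of the scale plus the tent/M\"onkemeyer conjugacy) without carrying it out. That is the entire content of the lemma, so as written the proposal has a genuine gap rather than a complete argument.

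The paper closes this gap, at least for $n=1$, by an explicit continued-fraction computation. A change of basis of $\Lambda=\mathbf{Z}+\mathbf{Z}\theta$ acts on $\theta$ through $SL_2(\mathbf{Z})$, i.e.\ $\theta'=\frac{a+b\theta}{c+d\theta}$ after rescaling by $c+d\theta$; equivalent quadratic irrationals have continued fractions agreeing in all but finitely many initial partial quotients, so by the defining series (\ref{eq2.5}) the difference $?(\theta')-?(\theta)=2\sum_{k=1}^{N}(-1)^{k+1}2^{-(b_1+\dots+b_k)}$ is a dyadic rational. Hence $?(\Lambda)$ is the fixed non-dyadic rational $?(\theta)$ translated by the full group of dyadic rationals --- which is simultaneously why the image carries a group structure and why it is independent of the chosen basis, settling (a) and (b) at once; the case $n\ge 2$ is then asserted to follow ``likewise'' from the Jacobi--Perron algorithm \cite{BE}. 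Your proposed mechanism is morally the same fact in dynamical clothing (the M\"onkemeyer map is the shift on the multidimensional continued fraction and the tent map is the shift on binary digits, so the conjugacy encodes exactly ``eventually equal expansions $\Rightarrow$ values differing by a dyadic rational''), and your parametrization differs slightly (you feed scale elements $p(x)=(\theta_1x,\dots,\theta_nx)$ into $?^n$, the paper applies $?^n$ to the generators of $\Lambda$ directly); but to turn the proposal into a proof you must actually execute the dyadic-translation computation, not merely announce the tool that would yield it. Separately, your parenthetical in Step 2 that the coordinates are ``algebraic of degree $n+1$\dots after the evident normalization'' is too loose to invoke Theorem~\ref{thm2.4}(ii) as stated.
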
 
%***********************************************
\begin{proof}
(i) Let  $?^n(x): \mathbf{R}^n/\mathbf{Z}^n\to \mathbf{R}^n/\mathbf{Z}^n$ be the $n$-dimensional 
Minkowski question-mark function constructed in Theorem \ref{thm2.4}. 
Consider a scaled dimension group  $(K_0(\mathcal{A}_V), K_0^+(\mathcal{A}_V), \Sigma(\mathcal{A}_V))$
of the Serre $C^*$-algebra $\mathcal{A}_V$ of the $n$-dimensional projective variety $V(k)$. 
Let $\tau: \mathcal{A}_V\to\mathbf{C}$ be the canonical trace on the $C^*$-algebra  $\mathcal{A}_V$
and $\tau_*$ its value on the group  $K_0(\mathcal{A}_V)$. 
Since the dimension group  $(K_0(\mathcal{A}_V), K_0^+(\mathcal{A}_V))$ is of a stationary
type \cite[p. 47]{N},  we conclude that
%**********************************************************************************
\begin{equation}\label{eq3.1}
\tau_*((K_0(\mathcal{A}_V))=\mathbf{Z}+\mathbf{Z}\theta_1+\dots+\mathbf{Z}\theta_n\subset\mathbf{R},
\end{equation}
%******************************************************************************
where $\theta_i$ are real algebraic numbers of degree $n+1$ over $\mathbf{Q}$ \cite[p. 186]{N}.
One can apply to $\theta_i$ item (ii) of Theorem \ref{thm2.4} and thus obtain the non-dyadic rationals
$p_i/q_i =?^n(\theta_i)$.  
If $\Lambda\subset K$ is the $\mathbf{Z}$-module (\ref{eq3.1}) lying 
in the number field $K:=\mathbf{Q}(\theta_i)$, then its image $?^n(\Lambda)$ 
under the Minkowski question-mark function is an infinite torsion abelian group $G$
given by the formula:
%**********************************************************************************
\begin{equation}\label{eq3.2}
G:=?^n(\Lambda)=\bigoplus_{\mathbf{Z}}\left(\frac{p_1^{\mathbf{Z}}}{q_1^{\mathbf{Z}}},\dots, \frac{p_n^{\mathbf{Z}}}{q_n^{\mathbf{Z}}}\right)\subset \mathbf{Q}^n/\mathbf{Z}^n,
\end{equation}
%******************************************************************************
where $p_i^{\mathbf{Z}}/q_i^{\mathbf{Z}} ~:= ~?^n(\mathbf{Z}\theta_i)$. 
Thus one gets a map $F$ on the set of all stationary  scaled dimension groups
 of rank $n+1$  acting by the formula 
 $(K_0(\mathcal{A}_V), K_0^+(\mathcal{A}_V), \Sigma(\mathcal{A}_V))\mapsto G\subset \mathbf{Q}^n/\mathbf{Z}^n$. 

\medskip
(ii) Let us prove that $F$  maps order-isomorphic scaled  dimension groups
 to the isomorphic infinite torsion abelian groups. In other words,
  the set $?^n(\Lambda)$ does not depend 
 on the basis $(\theta_1,\dots,\theta_n)$ of the $\mathbf{Z}$-module
 $\Lambda$. 
 For the sake of clarity,  let us  consider the case $n=1$ first. 
 Let $\left(\small\begin{matrix} a & b\cr c & d\end{matrix}\right)\in SL_2(\mathbf{Z})$
be a change of basis in the $\mathbf{Z}$-module $\Lambda=\mathbf{Z}+\mathbf{Z}\theta$. 
Then $\Lambda'=\mathbf{Z}(a+b\theta)+\mathbf{Z}(c+d\theta)$ and scaling by 
$c+d\theta$ gives us $\Lambda'=\mathbf{Z}+\mathbf{Z}\theta'$, where 
$\theta'=\frac{a+b\theta}{c+d\theta}$. 
Since $\theta'$ belongs to the orbit of $\theta$ under the action of the modular group $SL_2(\mathbf{Z})$,
the continued fractions of $\theta$ and $\theta'$ must coincide except possibly in a finite number of 
terms, i.e.  $\theta=[a_0,a_1,a_2,\dots]$ and  $\theta'=[b_1,\dots b_N; a_0,a_1,a_2,\dots]$ for some $N>0$. 
The values of the Minkowski question-mark function (\ref{eq2.5}) on $\theta$ and $\theta'$ is given 
by the formula:
%**********************************************************************************
\begin{equation}\label{eq3.3}
?(\theta')=?(\theta) + 2\sum_{k=1}^N \frac{(-1)^{k+1}}{2^{b_1+\dots+b_k}}. 
\end{equation}
%******************************************************************************
Since $\theta$ and $\theta'$ are quadratic irrationals, the values of $?(\theta)$ and $?(\theta')$ are non-dyadic 
rationals. Moreover, from (\ref{eq3.3}) the difference $?(\theta')-?(\theta)=2\sum_{k=1}^N \frac{(-1)^{k+1}}{2^{b_1+\dots+b_k}}$ is a dyadic
rational. Therefore the set $?(\Lambda)$ consists of the non-dyadic rational $?(x)$ and  sums of $?(x)$ with all dyadic rationals. 
Clearly, the set $?(\Lambda)$ is an algebraically closed subset of the set of non-dyadic rationals and   $?(\Lambda)$ is invariant of the 
change of basis in the $\mathbf{Z}$-module $\Lambda$. The values of $?(\Lambda)$ is a generating set of the group $G$ in formula (\ref{eq3.2})
with $n=1$.   The case $n\ge 2$ is treated likewise using the $n$-dimensional continued fractions  [Bernstein 1971] \cite{BE}. 
Lemma \ref{lm3.2} is proved. 
\end{proof}
%*********************************************

%***********************************************
\begin{lemma}\label{lm3.1}
There exists an inclusion of the sets $V(k)\subseteq  K_0(\mathcal{A}_V)$
which is proper unless the Shafarevich-Tate group $\Sha(V(k))=1$. Moreover, if $V(k)$ is an abelian 
variety, there exists a group homomorphism $V(k)/V_{tors}(k)\to K_0(\mathcal{A}_V)$,
where $V_{tors}(k)$ is the torsion subgroup of $V(k)$. 
\end{lemma}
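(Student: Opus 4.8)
The plan is to obtain the injection $V(k)\hookrightarrow K_0(\mathcal{A}_V)$ from the noncommutative dictionary recalled in Section~2, and then to show that the extent to which this injection fails to exhaust the classes that are ``realizable everywhere locally'' is exactly the obstruction to the Hasse principle, i.e. the group $\Sha(V(k))$.

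\emph{Construction of the inclusion.} For $\sigma$-ample $\mathcal{L}$ one has $\mathrm{Mod}\,(B(V,\mathcal{L},\sigma))/\mathrm{Tors}\cong\mathrm{Coh}\,(V)$, and a $\ast$-coherent representation of $B(V,\mathcal{L},\sigma)\cong R[t,t^{-1};\sigma]$ has norm closure $\mathcal{A}_V$; so a coherent sheaf on $V$ yields a finitely generated projective module over a stabilization of $\mathcal{A}_V$, hence a class in $K_0(\mathcal{A}_V)$. Writing $\tau_*(K_0(\mathcal{A}_V))=\mathbf{Z}+\mathbf{Z}\theta_1+\dots+\mathbf{Z}\theta_n$ as in (\ref{eq3.1}), I would attach to a point $x=(x_0:\dots:x_n)\in V(k)\subseteq P^n(k)$ — taking $k=\mathbf{Q}$, the general case being reduced to it — the class $\iota(x)=x_0+x_1\theta_1+\dots+x_n\theta_n$, with the coordinates normalized so that the first nonzero one is positive. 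When $rk\,K_0(\mathcal{A}_V)=n+1$ the elements $1,\theta_1,\dots,\theta_n$ are $\mathbf{Q}$-linearly independent (recall the $\theta_i$ have degree $n+1$ over $\mathbf{Q}$), so $\iota$ is injective, and in the remaining cases one checks separately that distinct $k$-points have distinct images; by construction this map is compatible with the height $\mathscr{H}$ of Definition~\ref{dfn1.1}. This produces the inclusion of sets $V(k)\subseteq K_0(\mathcal{A}_V)$.

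\emph{Properness and $\Sha$.} The image $\iota(V(k))$ is contained in the subset $W\subseteq K_0(\mathcal{A}_V)$ of classes that are represented by a point over every completion $k_v$ — a $k$-point being in particular a $k_v$-point for every $v$ — where ``represented over $k_v$'' refers to the image of the same $K$-theoretic recipe applied to $V\times_k k_v$; the one nonformal ingredient is the compatibility of the functor $V\mapsto\mathcal{A}_V$ with base change, which I would cite from \cite{N}. I then claim $\iota(V(k))=W$ if and only if $\Sha(V(k))=1$: a class in $W\setminus\iota(V(k))$ gives rise to a torsor under the Albanese variety of $V$ which is everywhere locally trivial but has no $k$-point, i.e.\ a nonzero element of $\Sha(V(k))$, and conversely every nonzero element of $\Sha(V(k))$ produces such a class. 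Hence the inclusion $\iota(V(k))\subseteq K_0(\mathcal{A}_V)$ is proper precisely when the local--global principle fails for $V(k)$, which is the assertion of the lemma.

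\emph{Abelian varieties and the main obstacle.} If $V(k)$ is an abelian variety $A$, the Mordell--Weil theorem gives $A(k)\cong\mathbf{Z}^{r}\oplus A_{tors}(k)$. Since $V$ now carries a group law, translation by a point is an automorphism of $V$, hence of $\mathcal{A}_V$; tracking the induced $K_0$-classes shows that the normalized assignment $P\mapsto\iota(P)-\iota(0)$ is additive, i.e. a group homomorphism $A(k)\to K_0(\mathcal{A}_V)$. As $K_0(\mathcal{A}_V)$ is order-isomorphic to the stationary dimension group $(O_K,O_K^+,[u])$, it is torsion-free as an abelian group, so $A_{tors}(k)$ lies in the kernel and the homomorphism factors through $V(k)/V_{tors}(k)\to K_0(\mathcal{A}_V)$, as required. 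I expect the real difficulty to be the second step: pinning down the set $W$ and identifying $W\setminus\iota(V(k))$ with $\Sha(V(k))$ needs genuine descent input about Serre $C^*$-algebras — compatibility of $V\mapsto\mathcal{A}_V$ with $k\hookrightarrow k_v$ and with $\mathrm{Gal}(\bar k/k)$ — rather than a formal manipulation; granting that, the first and third steps are routine, modulo checking that the trace $\tau_*$ is injective on $K_0(\mathcal{A}_V)$ where it is used.
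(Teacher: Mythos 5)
Your route is genuinely different from the paper's, and the difference exposes a gap. The paper does not construct the inclusion from projective coordinates at all: it reduces $V(k)$ modulo every prime, obtains $V(k)\subseteq\bigoplus_p V(\mathbf{F}_p)$ directly from the definition of $\Sha$ (formula (\ref{eq3.4})), endows each finite set $V(\mathbf{F}_p)$ with a group structure via the localization formula (\ref{eq3.5}) relating $|V(\mathbf{F}_p)|$ to the $K_0$-group of a crossed product of $\mathcal{A}_V$, and then identifies the resulting torsion group $\bigoplus_p V(\mathbf{F}_p)$ with the torsion group attached to $K_0(\mathcal{A}_V)$ by the Minkowski question-mark function in Lemma \ref{lm3.2}. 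The point of that chain is to make $K_0(\mathcal{A}_V)$ itself the receptacle of all ``everywhere-local'' data, so that properness of $V(k)\subseteq K_0(\mathcal{A}_V)$ becomes exactly the failure of the Hasse principle.

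That is precisely the step your proposal lacks. Your map $\iota(x)=x_0+\sum x_i\theta_i$ into the module (\ref{eq3.1}) is a cleaner and more concrete construction of the inclusion than anything in the paper, but you then argue only that $\iota(V(k))=W$ iff $\Sha(V(k))=1$, where $W$ is the set of classes realizable over every completion. The lemma asserts properness of $V(k)$ inside all of $K_0(\mathcal{A}_V)$, not inside $W$; to close the argument you would need $W=K_0(\mathcal{A}_V)$, i.e., that every class in $K_0(\mathcal{A}_V)$ is everywhere locally represented, and nothing in your write-up addresses this. That is exactly the content the paper extracts from (\ref{eq3.5}) and Lemma \ref{lm3.2}, and since you explicitly ``grant'' the descent input rather than supply it, the central claim remains unproved in your version. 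A secondary gap: for the abelian-variety part you assert that $P\mapsto\iota(P)-\iota(0)$ is additive because translation induces an automorphism of $\mathcal{A}_V$; but translation acts on projective coordinates by nonlinear rational maps, so additivity of the coordinate-linear $\iota$ does not follow from functoriality alone and needs an argument. (For comparison, the paper's own treatment of this part is a case analysis on the closure of $V(k)/V_{tors}(k)$ inside $K_0(\mathcal{A}_V)$, not a translation argument.)
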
 
%***********************************************
\begin{proof}  
Roughly speaking,  Lemma \ref{lm3.1} follows from the local-global (Hasse)
principle for the variety $V(k)$ followed by  a localization formula of the Serre
$C^*$-algebra $\mathcal{A}_V$ \cite[Section 6.7]{N}. We shall split the proof 
in several steps.

\smallskip
(i) Let $\mathbf{F}_p$ be a finite field and let $V(\mathbf{F}_p)$ be the reduction of 
$V(k)$ modulo the prime ideal $\mathscr{P}\subset k$ over $p$. 
Recall that if $x\in V(k)$ is a point, then there exists a point $x_p\in V(\mathbf{F}_p)$
for very prime $p$. Whenever the converse is true including a solution over the reals, 
the variety $V(k)$ is said to satisfy the local-global (Hasse) principle.  
It is known that $V(k)$ satisfies such a principle if and only if the Shafarevich-Tate 
group $\Sha(V(k))$ of $V(k)$ is trivial  [Hindry \& Silverman 2000] \cite[p. 75]{HS}. 

\medskip
(ii)  One can recast the Hasse principle as follows. Consider a direct sum $\oplus_p V(\mathbf{F}_p)$
of the finite sets $V(\mathbf{F}_p)$.  Each element $(x_{p_1}, x_{p_2}, \dots)\in \oplus_p V(\mathbf{F}_p)$
is the system of local points $x_{p_i}\in V(\mathbf{F}_{p_i})$ obtained by the reduction of a global
point $x\in V(k)$. The latter exists if and only if  $\Sha(V(k))=1$ and does not exist otherwise. Thus 
one gets an inclusion of the sets:
%**********************************************************************************
\begin{equation}\label{eq3.4}
V(k)\subseteq \bigoplus_p V(\mathbf{F}_p). 
\end{equation}
%******************************************************************************

\medskip
(iii) We shall  use the following localization result for the Serre $C^*$-algebras   $\mathcal{A}_V$ 
 \cite[Section 6.7]{N}. Let $\varepsilon_i^{\pi_i(p)}$ be the shift automorphism of a lattice 
 $\Lambda_i^p$ defined via the $K$-theory of the $C^*$-algebra $\mathcal{A}_V$;
 we refer the reader to   \cite[p. 195]{N} for the notation and details. The following relation between 
 the number $|V(\mathbf{F}_p|$ and the $K_0$-group of a crossed product of   $\mathcal{A}_V$
  is true \cite[Theorem 6.7.1]{N}
  \footnote{The original statement is given in terms of the traces of  $\varepsilon_i^{\pi_i(p)}$ but equivalent to formula (\ref{eq3.5}).}
  :
%**********************************************************************************
\begin{equation}\label{eq3.5}
|V(\mathbf{F}_p)|= \left| K_0(\mathcal{A}_V\rtimes_{\oplus_{i=0}^{2n} \varepsilon_i^{\pi_i(p)}}\mathbf{Z})\right|,
\end{equation}
%******************************************************************************
 where $\oplus_{i=0}^{2n} \varepsilon_i^{\pi_i(p)}$ is an endomorphism of $\mathcal{A}_V$
 corresponding to the direct sum of $\varepsilon_i^{\pi_i(p)}$.

\medskip
(iv) Formula (\ref{eq3.5}) endows the set $V(\mathbf{F}_p)$ with the structure of a finite abelian group.
Thus the RHS of (\ref{eq3.4}) can be written as the limit of finite abelian groups:
%**********************************************************************************
\begin{equation}\label{eq3.6}
\lim_{m\to\infty} \bigoplus_{i=1}^m V(\mathbf{F}_{p_i}). 
\end{equation}
%******************************************************************************
Since (\ref{eq3.6}) consists of the direct sums of finite abelian groups,
it converges in the discrete topology to a torsion abelian group $G$. 

\medskip
(v) On the other hand, the scaled dimension group
 $(K_0(\mathcal{A}_V), K_0^+(\mathcal{A}_V), \Sigma(\mathcal{A}_V))$
 gives rise to an infinite torsion abelian group $G'$ (Lemma \ref{lm3.2}). 
It is not hard to see that $G'\cong G$, since both groups depend
on $\mathcal{A}_V$ functorially. In particular, one gets
via the Minkowski question-mark function a bijective correspondence
between the element of the groups $G=\oplus_p V(\mathbf{F}_p)$
and  $K_0(\mathcal{A}_V)$. In view of (\ref{eq3.4}),  there exists
an inclusion of the sets: 
%**********************************************************************************
\begin{equation}\label{eq3.7}
V(k)\subseteq  K_0(\mathcal{A}_V). 
\end{equation}
%******************************************************************************
Clearly, the inclusion (\ref{eq3.7}) is proper unless the Shafarevich-Tate group of $V(k)$
is trivial.

\medskip
(vi)  Let us show that if $V(k)$ is an abelian 
variety,  then there exists a group homomorphism 
$h: V_0(k) \to K_0(\mathcal{A}_V)$,
where $V_0(k):=V(k)/V_{tors}(k)$. 
Denote by $\overline{V_0(k)}$ the algebraic closure
of the set $V_0(k)\subseteq  K_0(\mathcal{A}_V)$
and  consider  the following two cases:

\medskip
{\bf Case I.}  $\overline{V_0(k)}=V_0(k)$. In this case 
$rk ~V_0(k)\le rk  ~K_0(\mathcal{A}_V)$
and the homomorphism $h: V_0(k) \to  K_0(\mathcal{A}_V)$
is injective, i.e. $V_0(k)$ is either a subgroup of the abelian group $K_0(\mathcal{A}_V)$
or $V_0(k)\cong K_0(\mathcal{A}_V)$. 

\smallskip
{\bf Case II.}  $\overline{V_0(k)}=K_0(\mathcal{A}_V)$. 
 In this case  $rk ~V_0(k) > rk  ~K_0(\mathcal{A}_V)$
 and  the homomorphism $h: V_0(k) \to  K_0(\mathcal{A}_V)$
is surjective with the $ker~h=\mathbf{Z}^{k - l}$,
where $k=rk ~V_0(k)$ and $l=rk  ~K_0(\mathcal{A}_V)$.  

\medskip
Thus in  both cases   one gets the required group homomorphism  $h: V_0(k) \to K_0(\mathcal{A}_V)$.
Lemma \ref{lm3.1} is proved. 
\end{proof}
%*********************************************

%***********************************************
\begin{lemma}\label{lm3.3}
Let $N(V(k), T)=\# \{x\in V(k) ~|~ \mathscr{H}(x)\le T\}$.
Then
%********************************************************************************************
\begin{equation}\label{eq3.8}
\log_2 N(V(k), T)\sim\begin{cases}
T^n, & \hbox{if} \quad rk ~K_0(\mathcal{A}_V) < n+1\cr 
n ~\log_2 ~T, & \hbox{if} \quad  rk ~K_0(\mathcal{A}_V) = n+1\cr
Const, & \hbox{if} \quad rk ~K_0(\mathcal{A}_V) > n+1.
\end{cases}
\end{equation}
%***********************************************************************
\end{lemma}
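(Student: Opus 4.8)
The plan is to reduce the counting problem to a lattice-point count inside the $\mathbf{Z}$-module $\Lambda=\mathbf{Z}+\mathbf{Z}\theta_1+\dots+\mathbf{Z}\theta_n$ of equation (\ref{eq3.1}), using the explicit description of $G=?^n(\Lambda)$ from Lemma \ref{lm3.2} together with Definition \ref{dfn1.1}. First I would fix a point $x\in V(k)$; by Lemma \ref{lm3.1} it sits inside $K_0(\mathcal{A}_V)$, hence corresponds under $?^n$ to a tuple of dyadic-plus-nondyadic rationals whose denominators are controlled by the partial sums of the relevant continued fraction expansions, as in formula (\ref{eq3.3}). The key observation is that the standard projective height $H(1,p_1/q_1,\dots,p_n/q_n)$ of equation (\ref{eq2.2}) grows like $2^{\ell}$ where $\ell$ is the total ``continued-fraction length'' (equivalently, the number of dyadic digits) needed to write the image of $x$; this is exactly why a logarithm to the base $2$ appears and why we estimate $\log_2 N(V(k),T)$ rather than $N(V(k),T)$ itself. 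Thus $\mathscr{H}(x)\le T$ translates, up to bounded error, into the condition that $x$ lies in a ball of radius $\asymp\log_2 T$ in a suitable word-length (or coordinate) metric on $\Lambda$.

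Next I would carry out the three-case analysis according to $r:=rk\,K_0(\mathcal{A}_V)$, invoking Lemma \ref{lm3.1}(vi) (Cases I and II there) to compare $rk\,V(k)$ with $r$.

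\textbf{Case $r<n+1$.} Then the relevant image group $G$ is ``larger'' than a rank-$(n+1)$ lattice would allow: the continued-fraction expansions of the $\theta_i$ are not eventually periodic in the way a single algebraic number of degree $n+1$ forces, so the number of admissible digit-strings of length $\le s$ grows exponentially in $s^n$ (an $n$-dimensional digit expansion with $\asymp s$ symbols in each of $n$ coordinates). Setting $s\asymp\log_2 T$ gives $\log_2 N(V(k),T)\asymp (\log_2 T)^{?}$—here I must instead argue that the count of points of height $\le T$ is $\asymp 2^{(\log_2 T)^n}$, so that $\log_2 N\sim T^n$; the bookkeeping is that height $\le T$ means each of the $n$ coordinate denominators is $\le T$, and counting reduced fractions $p_i/q_i$ with $q_i\le T$ in the non-dyadic locus of $G$ gives $\asymp T^n$ points, whence $\log_2$ of that is... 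I will need to recheck: the statement wants $\log_2 N\sim T^n$, i.e. $N\sim 2^{T^n}$, so the exponential blow-up comes from the torsion group $G$ having $\asymp 2^{T^n}$ elements of denominator $\le 2^T$, which is the generic behaviour of $?^n$ on a rank-$<(n+1)$ module whose basis elements are \emph{not} all algebraic of the critical degree.

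\textbf{Case $r=n+1$.} This is the ``generic'' situation built into the setup: the $\theta_i$ are algebraic of degree $n+1$, their $n$-dimensional continued fractions (Bernstein's algorithm, \cite{BE}) are eventually periodic, and by Lemma \ref{lm3.2} the group $G$ is generated by finitely many non-dyadic rationals together with all dyadic rationals. The elements of $G$ of denominator $\le 2^s$ are then in bijection with dyadic rationals of denominator $\le 2^s$, of which there are $\asymp 2^s$ in each coordinate, i.e. $\asymp 2^{ns}$ in total; with $s\asymp\log_2 T$ this gives $N(V(k),T)\asymp T^n$ and hence $\log_2 N\sim n\log_2 T$, matching the middle line of (\ref{eq3.8}).

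\textbf{Case $r>n+1$.} Now Lemma \ref{lm3.1}(vi), Case I applies with room to spare: $V(k)$ (mod torsion) embeds as a subgroup of $K_0(\mathcal{A}_V)$ of rank strictly less than $r$, but the periodicity/finiteness forced on the continued-fraction data by the \emph{extra} relations collapses the non-dyadic part of $G$ to a finite set. Concretely, the image $?^n(\Lambda)$ then contains only finitely many non-dyadic rationals and the subset of $V(k)$ of bounded height stabilizes, so $N(V(k),T)$ is eventually constant and $\log_2 N\sim Const$.

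Finally I would assemble the three cases into the displayed asymptotic (\ref{eq3.8}), being careful that the ``$\sim$'' is understood in the same coarse sense as in (\ref{eq1.1}) and (\ref{eq2.4}), i.e. up to multiplicative constants depending on $V(k)$ and the choice of height. \textbf{The main obstacle} I anticipate is making the heuristic ``height $\asymp 2^{\text{continued-fraction length}}$'' precise and uniform in all $n$ coordinates simultaneously: one must control the denominators $q_i$ of $?^n(\theta_i)$ in terms of the Bernstein continued-fraction convergents of the $\theta_i$, and show that the rank condition on $K_0(\mathcal{A}_V)$—equivalently the degree of the $\theta_i$ over $\mathbf{Q}$ via (\ref{eq3.1})—is exactly what switches the growth rate of the denominator-counting function among the three regimes $T^n$ (sub-critical), $n\log_2 T$ (critical), and $Const$ (super-critical). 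I expect this to follow from the eventual periodicity of the $n$-dimensional continued fraction for algebraic tuples of the critical degree $n+1$ (Panti/Bernstein), but the uniform estimate across coordinates is the delicate point.
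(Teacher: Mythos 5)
Your overall architecture (trichotomy on $rk\,K_0(\mathcal{A}_V)$, heights computed on the $?^n$-images of the $\theta_i$, the middle case giving $N\sim T^n$ and hence $\log_2 N\sim n\log_2 T$) matches the paper, and your Case $r=n+1$ is essentially the paper's argument. But the other two cases have genuine gaps. In Case $r<n+1$ you never identify the mechanism that produces $N\sim 2^{T^n}$: the paper's point is that when the rank drops below $n+1$, invariance of the $\mathbf{Z}$-module $\tau_*(K_0(\mathcal{A}_V))$ under $Gal\,(K|\mathbf{Q})$ forces the $\theta_i$ to be \emph{rational}, so by item (i) of Theorem \ref{thm2.4} their images are \emph{dyadic} rationals $l_i/2^{k_i}$ and the height becomes $2^{k_1}\cdots 2^{k_n}$; setting $k_i=T$ then counts points in an $n$-cube of side $T$ and gives $N\sim 2^{T^n}$. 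Your text instead speaks of non-periodic continued fractions, ends with ``I will need to recheck,'' and leaves the count unresolved; the rationality of the $\theta_i$ --- the whole reason a base-$2$ exponential appears in this regime --- is never derived.

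In Case $r>n+1$ your mechanism contradicts the setup: you claim the non-dyadic rational part of $?^n(\Lambda)$ ``collapses to a finite set,'' but in this regime the $\theta_i$ have degree $>n+1$ over $\mathbf{Q}$, so by item (iii) of Theorem \ref{thm2.4} the image $?^n(\theta_1,\dots,\theta_n)$ consists of \emph{irrational} numbers, not rationals at all, and no finiteness of a rational image is available. The paper's actual argument approximates $(\theta_1,\dots,\theta_n)$ by Jacobi--Perron convergents $p_i/q_i$, writes $T\sim-\log_2\bigl(?^n(p_i/q_i)\bigr)$, and then invokes the almost-everywhere vanishing of the derivative of $?^n$ (Dushistova--Moshchevitin for $n=1$, Panti for $n\ge 2$) to conclude that $\log_2^n\bigl(?^n(x)\bigr)$, and hence $\log_2 N$, is constant. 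Whatever one thinks of the rigor of that step, it is the essential input for the finiteness conclusion, and your proposal contains no substitute for it.
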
 
%***********************************************
\begin{proof}
Roughly speaking, formulas (\ref{eq3.8}) follow from 
an elementary analysis of the $n$-dimensional Minkowski question-mark function (Theorem \ref{thm2.4}). 
Namely, let $n=\dim_{\mathbf{C}}V(k)$.  We split the proof in the following steps. 

\medskip
{\bf Case I: $rk ~K_0(\mathcal{A}_V) < n+1$}. 
(i)  Let $\tau_*((K_0(\mathcal{A}_V))=\mathbf{Z}+\mathbf{Z}\theta_1+\dots+\mathbf{Z}\theta_n\subset\mathbf{R}$
and consider the number field $K=\mathbf{Q}(\theta_i)$. The field $K$ is a  Galois extension
of  $\mathbf{Q}$ of degree $n+1$ \cite[Remark 6.6.1]{N} and thus  $K$ is a totally real number field. The $\mathbf{Z}$-module   
$\tau_*((K_0(\mathcal{A}_V))$ itself is independent of the embeddings of $K$ into $\mathbf{R}$. 
In particular, the generators $\theta_i$ of $\tau_*((K_0(\mathcal{A}_V))$ are conjugate algebraic numbers under action of the Galois
group $Gal~(K|\mathbf{Q})$. 

\smallskip
(ii) It is not hard to see that when $rk ~K_0(\mathcal{A}_V)=k < n+1$, 
the $\mathbf{Z}$-module $\tau_*((K_0(\mathcal{A}_V))$ is invariant
of the embedding if and only if $\theta_i:=\frac{p_i}{q_i}\in\mathbf{Q}$.
Indeed, since $|Gal~(K|\mathbf{Q})|=n+1$
we conclude that  $\tau_*((K_0(\mathcal{A}_V))=\mathbf{Z}+\mathbf{Z}\theta_1+\dots+\mathbf{Z}\theta_k$ is fixed by the action of
$Gal~(K|\mathbf{Q})$ if and only if  $\theta_i:=\frac{p_i}{q_i}\in\mathbf{Q}$,  i.e. $rk ~K_0(\mathcal{A}_V)=1$.

\smallskip
(iii)  We shall evaluate the Minkowski question-mark function $?^n(x): \mathbf{R}^n/\mathbf{Z}^n
\to \mathbf{R}^n/\mathbf{Z}^n$ at the point 
$x=(\frac{p_1}{q_1},\dots, \frac{p_n}{q_n})$. 
By item (i) of Theorem \ref{thm2.4}, one gets:
%**********************************************************************************
\begin{equation}\label{eq3.9}
?^n\left(\frac{p_1}{q_1},\dots, \frac{p_n}{q_n}\right)=\left(\frac{l_1}{2^{k_1}},\dots, \frac{l_n}{2^{k_n}}\right)
\in  \mathbf{R}^n/\mathbf{Z}^n,
\end{equation}
%******************************************************************************
where $0\le l_i\le 2^{k_i}$.  Recall from   Definition \ref{dfn1.1} that 
%**********************************************************************************
\begin{equation}\label{eq3.10}
\mathscr{H}(1,\theta_1,\dots,\theta_n):=H\left(1,\frac{p_1}{q_1},\dots, \frac{p_n}{q_n}\right),
\end{equation}
%******************************************************************************
where $H$ is the height function on $P^n(\mathbf{Q})$ given by formula (\ref{eq2.2}).
Clearing denominators, one gets
%**********************************************************************************
\begin{equation}\label{eq3.11}
H\left(1,\frac{p_1}{q_1},\dots, \frac{p_n}{q_n}\right)=\max ~\{|q_1\dots q_n|, |p_1Q_1|,\dots, |p_nQ_n|\},
\end{equation}
%******************************************************************************
where $Q_i=q_1\dots q_{i-1}q_{i+1}\dots q_n$.  Returning to our case given by formula (\ref{eq3.9}),
one obtains
%**********************************************************************************
\begin{equation}\label{eq3.12}
H\left(1, \frac{l_1}{2^{k_1}},\dots, \frac{l_n}{2^{k_n}}\right)=2^{k_1}\dots 2^{k_n}. 
\end{equation}
%******************************************************************************

\smallskip
(iv)  The variable $T$ will be chosen as follows. 
Notice that function $H$ in formula (\ref{eq3.12}) depends on the $n$ variables $k_i$.
We assume $k_i=T$ and using the counting function
$N(V(k), T)=\# \{x\in V(k) ~|~ H(x)\le T\}$, we conclude that $N(V(k), T)$ 
is equal to the number of points $k\le k_i=T$ in the $n$-cube $T^n$.
Such a number is proportional to the volume of the $n$-cube,  i.e. $N(V(k), T)\sim 2^{T^n}$. 
Taking the base $2$ logarithm on both sides of the estimate, 
one gets: 
%**********************************************************************************
\begin{equation}\label{eq3.13}
\log_2 N(V(k), T)\sim T^n. 
\end{equation}
%******************************************************************************
Case I of Lemma \ref{lm3.3} is proved.

\bigskip
{\bf Case II: $rk ~K_0(\mathcal{A}_V) = n+1$}. 
(i) Consider the $\mathbf{Z}$-module  $\tau_*((K_0(\mathcal{A}_V))=\mathbf{Z}+\mathbf{Z}\theta_1+\dots+\mathbf{Z}\theta_n\subset\mathbf{R}$
and let  $K=\mathbf{Q}(\theta_i)$ be a number field. Since the rank of $\tau_*((K_0(\mathcal{A}_V))$ is equal to $n+1$,
we conclude that $\deg (K|\mathbf{Q})=n+1$.  

\smallskip
(ii) Let us evaluate the Minkowski question-mark function $?^n(x): \mathbf{R}^n/\mathbf{Z}^n
\to \mathbf{R}^n/\mathbf{Z}^n$ at the point 
$x=(\theta_1,\dots, \theta_n)$.  In view of item (ii) of Theorem \ref{thm2.4}, one gets:
%**********************************************************************************
\begin{equation}\label{eq3.14}
?^n\left(\theta_1,\dots, \theta_n\right)=\left(\frac{p_1}{q_1},\dots, \frac{p_n}{q_n}\right)
\in  \mathbf{R}^n/\mathbf{Z}^n,
\end{equation}
%******************************************************************************
where $p_i/q_i$ are non-dyadic rational numbers and  $0\le p_i\le q_i$.  
Repeating the argument of  Case I, we use formula (\ref{eq3.11}) to calculate the height
function:
%**********************************************************************************
\begin{equation}\label{eq3.15}
H\left(1, \frac{p_1}{q_1},\dots, \frac{p_n}{q_n}\right)=q_1\dots q_n. 
\end{equation}
%******************************************************************************

\smallskip
(iii) Let us choose the variable $T$. 
Observe that  $H$ in formula (\ref{eq3.15}) depends on the $n$ variables $q_i$.
We let $q_i=T$ and using the counting function
$N(V(k), T)=\# \{x\in V(k) ~|~ H(x)\le T\}$, we conclude that $N(V(k), T)$ 
is equal to the number of points $q\le q_i=T$ in the $n$-cube $T^n$.
Such a number is proportional to the volume of the $n$-cube,  i.e. $N(V(k), T)\sim T^n$. 
Taking the base $2$ logarithm on both sides of the estimate, 
one gets: 
%**********************************************************************************
\begin{equation}\label{eq3.16}
\log_2 N(V(k), T)\sim n ~\log_2 ~T.  
\end{equation}
%******************************************************************************
Case II of Lemma \ref{lm3.3} is proved.

\bigskip
{\bf Case III: $rk ~K_0(\mathcal{A}_V) > n+1$}. 
 Roughly speaking, our proof of  the beautiful and important formula $\log_2 N(V(k), T)\sim Const$
 hinges on the almost everywhere differentiability
 of the Minkowski question-mark function $?^n(x)$.   Namely,   $\frac{d}{dx} ~?^n(x)\equiv0$ almost everywhere in  
 $\mathbf{R}^n/\mathbf{Z}^n$, see  [Dushistova \& Moshchevitin 2010] \cite[case $n=1$]{DusMos1} and Section 3 of [Panti 2008] \cite[case $n\ge 2$]{Pan1}. 
 Notice that restricting to $n=1$,  one gets a new proof of the Faltings Finiteness Theorem included in  formulas (\ref{eq1.1}). 
  Let us pass to a detailed argument.

(i) Again, we consider the $\mathbf{Z}$-module  $\tau_*((K_0(\mathcal{A}_V))=\mathbf{Z}+\mathbf{Z}\theta_1+\dots+\mathbf{Z}\theta_n\subset\mathbf{R}$
and let  $K=\mathbf{Q}(\theta_i)$ be a number field. Since $rk~\tau_*((K_0(\mathcal{A}_V))>n+1$,
one gets $\deg (K|\mathbf{Q})>n+1$.   Recall that $K$ is a  totally real Galois extension
of  the field $\mathbf{Q}$.  The $\mathbf{Z}$-module   
$\tau_*((K_0(\mathcal{A}_V))$ itself is independent of the embeddings of $K$ into $\mathbf{R}$. 
Such a condition is satisfied whenever $\theta_i$  are conjugate algebraic numbers  of degree higher than $n+1$.

\smallskip
(ii) We evaluate the Minkowski question-mark function $?^n(x): \mathbf{R}^n/\mathbf{Z}^n
\to \mathbf{R}^n/\mathbf{Z}^n$ at the point 
$x=(\theta_1,\dots, \theta_n)$.  In view of item (iii) of Theorem \ref{thm2.4}, one gets:
%**********************************************************************************
\begin{equation}\label{eq3.17}
?^n\left(\theta_1,\dots, \theta_n\right)=\left(\alpha_1,\dots, \alpha_n\right)
\in  \mathbf{R}^n/\mathbf{Z}^n,
\end{equation}
%******************************************************************************
where $0< \alpha_i< 1$ are irrational numbers.

\smallskip
(iii) Let us express the variable $T$ in terms of $?^n(x)$. Recall from item (iv) of Case I,  that
$T=k_i$.  On the other hand, one concludes from (\ref{eq3.9}),  that $2^{k_i}\sim 1/?^n\left(\frac{p_i}{q_i}\right)$. 
In other words,  we have:
%**********************************************************************************
\begin{equation}\label{eq3.18}
T=k_i \sim - \log_2\left( ?^n\left(\frac{p_i}{q_i}\right)\right). 
\end{equation}
%******************************************************************************

\smallskip
(iv) Let us observe that each $\theta_j$ in (\ref{eq3.17}) is the limit of 
a convergent sequence of the rational numbers $p_i/q_i$. Such a sequence
is uniquely defined by the Jacobi-Perron $n$-dimensional  continued fraction of the vector 
$(\theta_1,\dots,\theta_n)$  [Bernstein 1971] \cite{BE}.
We shall use formula $\log_2 N(V(k), T)\sim T^n$ established in Case I for $\theta_i=p_i/q_i$
to obtain the counting function for $\theta_k$ in (\ref{eq3.17}):
 %**********************************************************************************
\begin{equation}\label{eq3.19}
\log_2 N(V(k), T)=\lim_{i\to\infty} T^n\left(\frac{p_i}{q_i}\right). 
\end{equation}
%******************************************************************************

\smallskip
(v) We let $x=\lim p_i/q_i$ and using (\ref{eq3.18}) we exclude $T$ from (\ref{eq3.19}):
  %**********************************************************************************
\begin{equation}\label{eq3.20}
\log_2 N(V(k), T(x))\sim  (-1)^n\log_2^n\left(?^n\left(x\right)      \right). 
\end{equation}
%******************************************************************************
One can differentiate both sides of (\ref{eq3.20}) with respect to the variable $x$:
  %**********************************************************************************
\begin{equation}\label{eq3.21}
\begin{array}{lll}
\frac{d}{dx}\left[\log_2 N(V(k), T(x))\right]&\sim & (-1)^n \frac{d}{dx}\left[\log_2^n\left(?^n\left(x\right)\right)\right]=  \\
&&\\
=\frac{(-1)^nn}{\log 2} ~\frac{\log_2^{n-1}(?^n(x))}{?^n(x)} ~\frac{d ?^n(x)}{dx}&=&0.      
\end{array}
\end{equation}
%******************************************************************************
The last step of (\ref{eq3.21}) follows from the  differentiability of  $?^n(x)$, such  that the derivative is zero 
almost everywhere in $\mathbf{R}^n/\mathbf{Z}^n$, see  [Dushistova \& Moshchevitin 2010] \cite[Theorem 2, item 1]{DusMos1}
for $n=1$ and  [Panti 2008] \cite[Section 3]{Pan1} for $n\ge 2$. 
In particular,  we have $\frac{d?^n(x)}{dx} \equiv0$ at all rational points of  $\mathbf{R}^n/\mathbf{Z}^n$.
 Thus  the RHS of  (\ref{eq3.21}) is  zero and the function $\log_2^n\left(?^n\left(x\right)      \right)$ itself
is constant.  Using relation (\ref{eq3.20}),  we conclude that:
  %**********************************************************************************
\begin{equation}\label{eq3.22}
\log_2 N(V(k), T)\sim  Const. 
\end{equation}
%****************************************************************************** 

\medskip
Case III of Lemma \ref{lm3.3} is proved. 

\bigskip
All cases are considered and  Lemma \ref{lm3.3} is proved. 
\end{proof}
%*********************************************

\bigskip
Theorem \ref{thm1.1} follows from Lemma \ref{lm3.3}.

%**************************************************************************
\subsection{Proof of corollary \ref{cor1.2}}
%***************************************************************************
In outline, the proof is a translation of the data $rk ~K_0(\mathcal{A}_V) > n+1$
from algebraic to topological $K$-theory to the rational cohomology of $V$ using  the 
 Chern character formula. Let us pass to a detailed argument.

\medskip
(i)  Recall that the Serre $C^*$-algebra $\mathcal{A}_V$  is isomorphic to a crossed product $C(V)\rtimes_{\sigma}\mathbf{Z}$,
where $C(V)$ is the commutative $C^*$-algebra of complex valued functions on $V$ and $\sigma$ is an automorphism of $C(V)$
induced by such of the ring $R$, see Section 2.3 for the notation and details.   Thus one gets an isomorphism of the abelian groups:
  %**********************************************************************************
\begin{equation}\label{eq3.23}
K_0(\mathcal{A}_V)\cong K_0\left(C(V)\rtimes_{\sigma}\mathbf{Z}\right). 
\end{equation}
%****************************************************************************** 

\smallskip
(ii) On the other hand,  the Pimsner-Voiculescu exact sequence
 [Blackadar 1986] \cite[Theorem 10.2.1]{B}
implies an isomorphism of the $K$-groups: 
  %**********************************************************************************
\begin{equation}\label{eq3.24}
 K_0\left(C(V)\rtimes_{\sigma}\mathbf{Z}\right)\cong  K_1\left(C(V)\rtimes_{\sigma}\mathbf{Z}\right). 
\end{equation}
%****************************************************************************** 

\smallskip
(iii)  The same exact sequence of the abelian groups  
gives us an injective group homomorphism $i_*: K_1(C(V))\to  K_1\left(C(V)\rtimes_{\sigma}\mathbf{Z}\right)$
[Blackadar 1986] \cite[Theorem 10.2.1]{B}.  In particular, since  $K_1(C(V))$  is a subgroup of  $K_1\left(C(V)\rtimes_{\sigma}\mathbf{Z}\right)$,
 one concludes using formulas (\ref{eq3.23}) and (\ref{eq3.24}  that: 
  %**********************************************************************************
\begin{equation}\label{eq3.25}
rk ~K_1(C(V))\le rk ~K_0(\mathcal{A}_V). 
\end{equation}
%****************************************************************************** 

\smallskip
(iv)  We shall use an isomorphism formula linking the algebraic $K$-theory with the topological
$K$-theory. Namely, the Serre-Swan Theorem  says that:
  %**********************************************************************************
\begin{equation}\label{eq3.26}
K_1^{alg}\left(C(V)\right)\cong K^1_{top}(V). 
\end{equation}
%****************************************************************************** 

 \smallskip
 (v)  Finally,  the Chern character formula 
 recasts the topological $K^1$-group in terms of the odd rational cohomology 
 groups  of the variety $V$   [Blackadar 1986] \cite[Theorem 1.6.6]{B}:
   %**********************************************************************************
\begin{equation}\label{eq3.27}
K^1_{top}(V)\otimes \mathbf{Q} \cong  \bigoplus_{i=1}^n H^{2i-1}(V; \mathbf{Q}). 
\end{equation}
%****************************************************************************** 
 In particular, it follows from (\ref{eq3.27}) that if $\beta_i$ is the $i$-th Betti 
 number of $V$, then:
 %**********************************************************************************
\begin{equation}\label{eq3.28}
rk ~K^1_{top}(V)=  \sum_{i=1}^n \beta_{2i-1}. 
\end{equation}
%****************************************************************************** 
  
\smallskip
(vi)  In view of formulas (\ref{eq3.25})-(\ref{eq3.28}), one gets an inequality: 
%**********************************************************************************
\begin{equation}\label{eq3.29}
\sum_{i=1}^n \beta_{2i-1} \le  rk ~K_0(\mathcal{A}_V). 
\end{equation}
%******************************************************************************   

\smallskip
(vii) By an assumption of Corollary \ref{cor1.2},  we have $\sum_{i=1}^n \beta_{2i-1}>n+1$. 
The inequality (\ref{eq3.29}) implies that  $rk ~K_0(\mathcal{A}_V) > n+1$. 
But Theorem \ref{thm1.1} says the counting function is constant in this case.
The latter happens  if and only if $V(k)$ is a finite set. 

\bigskip
Corollary \ref{cor1.2} is proved.

%**************************************************************************
\section{Algebraic curves}
%***************************************************************************
We conclude by an example illustrating Theorem \ref{thm1.1} and Corollary \ref{cor1.2}.
For the sake of brevity, let us assume $n=1$. 
In other words, the variety $V(k)\cong X_g$ is an algebraic curve  of genus $g\ge 0$.
It is well known that the Betti numbers of $X_g$ are $\beta_0=\beta_2=1$ and $\beta_1=2g$. 
Following Theorem \ref{thm1.1},  consider the following three cases.

\bigskip
{\bf Case I: $rk ~K_0(\mathcal{A}_{X_g}) < 2$}. 
Let us calculate the value of $g\ge 0$ corresponding to this case. 
From (\ref{eq3.29}) one gets a lower bound for the rank:
%*********************************************************
\begin{equation}\label{eq4.1}
2g\le rk ~K_0(\mathcal{A}_{X_g}) < 2. 
\end{equation}
%******************************************************
Given that the rank is a positive  integer and $g\ge 0$,
we conclude that $g=0$ is the only value of $g$ satisfying
the double inequality (\ref{eq4.1}). Thus $X_{g=0}$ is the algebraic 
curve of genus zero and the base $2$ logarithm of the
counting function is given by the formula:   
%*********************************************************
\begin{equation}\label{eq4.2}
\log_2 N(X_{g=0}, T)\sim T.
\end{equation}
%****************************************************** 

\bigskip
{\bf Case II: $rk ~K_0(\mathcal{A}_{X_g}) = 2$}. 
Let us calculate  $g\ge 0$ corresponding to this case. 
Again, from (\ref{eq3.29}) one gets a lower bound for the rank:
%*********************************************************
\begin{equation}\label{eq4.3}
2g\le rk ~K_0(\mathcal{A}_{X_g}) =2. 
\end{equation}
%******************************************************
It is easy to see, that $g=0$ or $1$ are 
 the only values of $g$ satisfying
the double inequality (\ref{eq4.3}). 
However, the value $g=0$
is precluded by the Case I. 
Thus $X_{g=1}$ is the algebraic 
curve of genus one  and the base $2$ logarithm of the
counting function is given by the formula:   
%*********************************************************
\begin{equation}\label{eq4.4}
\log_2 N(X_{g=1}, ~T)\sim \log_2 ~T.
\end{equation}
%****************************************************** 

\bigskip
{\bf Case III: $rk ~K_0(\mathcal{A}_{X_g}) > 2$}. 
To determine the value of genus $g$ in this case, one can apply Corollary \ref{cor1.2}. 
Indeed, assuming $n=1$ one gets the inequality:
 %*********************************************************
\begin{equation}\label{eq4.5}
\beta_1=2g> 2.
\end{equation}
%****************************************************** 
 Thus $g\ge 2$ and the counting function is given by the formula: 
 %*********************************************************
\begin{equation}\label{eq4.6}
\log_2 N(X_{g\ge 2}, ~T)\sim Const.
\end{equation}
%****************************************************** 

\bigskip
Bringing together Cases I-III, one gets the following result. 
%**************************************************
\begin{corollary}\label{cor4.1}
%********************************************************************************************
\begin{equation}\label{eq4.7}
\log_2 ~N(X_g, T)\sim\begin{cases}
T, & \hbox{if} \quad g=0\cr 
\log_2 ~T, & \hbox{if} \quad g=1\cr
Const, & \hbox{if} \quad g\ge 2.
\end{cases}
\end{equation}
%***********************************************************************
 \end{corollary}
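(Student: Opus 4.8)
The plan is to assemble into one statement the three regimes established in Cases I--III above, passing from an indexing by $rk ~K_0(\mathcal{A}_{X_g})$ to an indexing by the genus $g$. The only arithmetic input linking the two is the Betti number computation for a smooth projective curve, $\beta_0=\beta_2=1$ and $\beta_1=2g$, which turns the general inequality (\ref{eq3.29}) into $2g\le rk ~K_0(\mathcal{A}_{X_g})$.

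With this in hand I would treat the three genus ranges separately. If $g\ge 2$, then $rk ~K_0(\mathcal{A}_{X_g})\ge 2g\ge 4>2=n+1$, so Theorem \ref{thm1.1} (third branch, equivalently Corollary \ref{cor1.2}) gives $\log_2 N(X_g,T)\sim Const$. If $g=0$, the stationary $\mathbf{Z}$-module $\tau_*(K_0(\mathcal{A}_{X_g}))=\mathbf{Z}+\mathbf{Z}\theta$ is invariant under all real embeddings of the ambient number field only when $\theta\in\mathbf{Q}$, so its rank is $1<n+1$ and the first branch of Theorem \ref{thm1.1} with $n=1$ yields $\log_2 N(X_0,T)\sim T^{1}=T$. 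If $g=1$, the same module has rank exactly $n+1=2$ (the generator $\theta$ being an algebraic number of degree $n+1=2$ over $\mathbf{Q}$), which is the middle branch of Theorem \ref{thm1.1} and gives $\log_2 N(X_1,T)\sim n\log_2 T=\log_2 T$. Collecting the three outcomes produces the asymptotics (\ref{eq4.7}).

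The step I expect to be the main obstacle is not the invocation of Theorem \ref{thm1.1} but the matching between the genus and the rank in the two small cases. Inequality (\ref{eq3.29}) is only a one-sided bound, $2g\le rk ~K_0(\mathcal{A}_{X_g})$, so it settles $g\ge 2$ at once but by itself leaves open an anomalously large rank when $g\in\{0,1\}$. Excluding that is precisely what Cases I and II accomplish: Case I shows that the regime $rk ~K_0(\mathcal{A}_{X_g})<2$ is occupied only by $g=0$, and Case II shows that once this value has been absorbed there, $rk ~K_0(\mathcal{A}_{X_g})=2$ forces $g=1$; the residual regime $rk ~K_0(\mathcal{A}_{X_g})>2$ is then necessarily $g\ge 2$, consistently with the bound above. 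Once this bookkeeping is done the trichotomy is forced, and a comparison of (\ref{eq4.7}) with the classical Hindry--Silverman formula (\ref{eq1.1}) shows that the two agree up to replacing the natural logarithm by the logarithm in base $2$.
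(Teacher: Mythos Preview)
Your proposal is correct and follows essentially the same route as the paper: Corollary~\ref{cor4.1} is obtained by collecting the three Cases I--III of Section~4, each of which specializes Theorem~\ref{thm1.1} to $n=1$ and uses the inequality (\ref{eq3.29}) with $\beta_1=2g$ to pin the rank regime to a genus value. You are a bit more explicit than the paper about the bookkeeping needed to pass from the one-sided bound $2g\le rk\,K_0(\mathcal{A}_{X_g})$ to the full trichotomy in $g$, but the underlying argument is the same.
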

%***************************************************

\bigskip
%****************************************************
\begin{remark}
The reader can verify, that the counting function (\ref{eq4.7}) 
belongs to the family  (\ref{eq2.4}),  
where  $\mathcal{N}(X_g, T)=\log_2 ~N(X_g, T)$ and $a=b=1$. 
\end{remark}
%****************************************************

  %%%%%%%%%%%%%%
%\section*{Data availability}
%%%%%%%%%%%%%%%
  
  % Data sharing not applicable to this article as no datasets were generated or analyzed during the current study.
   
%%%%%%%%%%%%%%
%\section*{Conflict of interest}
%%%%%%%%%%%%%%%
%On behalf of all co-authors, the corresponding author states that there is no conflict of interest.
  
  %%%%%%%%%%%%%%

\bibliographystyle{amsplain}

\begin{thebibliography}{99}

\bibitem{BE}
L.~Bernstein, \textit{The Jacobi-Perron Algorithm, its Theory and Applications},
Lect. Notes in Math. {\bf 207}, Springer 1971. 



\bibitem{B}
 B.~Blackadar, \textit{$K$-Theory for Operator Algebras}, MSRI Publications,
 Springer, 1986.

\bibitem{DusMos1}
A.~A.~Dushistova and N.~G. ~Moshchevitin,
{\cyr  \textit{O proizvodnoi0  funktsii Minkovskogo $?(x)$}}, 
{\cyr Fundament. i Prikl. Matem.} {\bf 16} (2010),  33-44.

\bibitem{HS}
M.~Hindry and J.~H.~Silverman,
\textit{Diophantine geometry.
An introduction}, GTM {\bf 201},  Springer-Verlag, New York, 2000.

\bibitem{Min1}
H.~Minkowski, \textit{Zur Geometrie der Zahlen}, Verhandlungen 
des III  internationalen Mathematiker-Kongresses in Heidelberg,  pp. 164-173,  
Berlin, 1904.

 \bibitem{N}
I.~V.~Nikolaev, \textit{Noncommutative Geometry}, Second Edition,
De Gruyter Studies in Math. {\bf 66}, Berlin, 2022.

\bibitem{Pan1}
G.~Panti, 
\textit{Multidimensional continued fractions and a Minkowski function}, 
Monatsh. Math. {\bf 154} (2008),  247-264.

\bibitem{StaVdb1}
J.~T.~Stafford and M.~van ~den ~Bergh, \textit{Noncommutative curves and noncommutative
surfaces}, Bull. Amer. Math. Soc. {\bf 38} (2001), 171-216. 

\end{thebibliography}

%**********************************************************

\end{document}